%%%%%%%%%%%%%%%%%%%% author.tex %%%%%%%%%%%%%%%%%%%%%%%%%%%%%%%%%%%
%
% sample root file for your "contribution" to a contributed volume
%
% Use this file as a template for your own input.
%
%%%%%%%%%%%%%%%% Springer %%%%%%%%%%%%%%%%%%%%%%%%%%%%%%%%%%
% RECOMMENDED %%%%%%%%%%%%%%%%%%%%%%%%%%%%%%%%%%%%%%%%%%%%%%%%%%%
\documentclass{svmult}

% choose options for [] as required from the list
% in the Reference Guide
\usepackage{graphicx}
\usepackage{amssymb}
\usepackage{amsmath}
\usepackage{epsfig}
\usepackage{enumerate}
\usepackage{float}
\usepackage{mathptmx}
\usepackage{bm}
\usepackage{multirow}
%% selects Times Roman as basic font
%\usepackage{helvet}         % selects Helvetica as sans-serif font
%\usepackage{courier}        % selects Courier as typewriter font
%\usepackage{type1cm}        % activate if the above 3 fonts are
%                            % not available on your system
%%
%\usepackage{makeidx}         % allows index generation
%\usepackage{graphicx}        % standard LaTeX graphics tool
%                             % when including figure files
%\usepackage{multicol}        % used for the two-column index
%\usepackage[bottom]{footmisc}% places footnotes at page bottom

% see the list of further useful packages
% in the Reference Guide

\makeindex             % used for the subject index
                       % please use the style svind.ist with
                       % your makeindex program

%%%%%%%%%%%%%%%%%%%%%%%%%%%%%%%%%%%%%%%%%%%%%%%%%%%%%%%%%%%%%%%%%%%%%%%%%%%%%%%%%%%%%%%%%

\begin{document}

\title*{A New Class of Monotone/Convex Rational Fractal Function}
\titlerunning{Rational Fractal Function}
 %Use \titlerunning{Short Title} for an abbreviated version of
% your contribution title if the original one is too long
\author{S. K. Katiyar$^\dag$, A. K. B. Chand$^{\dag}$}
% Use \authorrunning{Short Title} for an abbreviated version of
% your contribution title if the original one is too long
\institute{$^\dag$Department of
Mathematics, Indian Institute of Technology Madras, Chennai -
600036, India\\\email{sbhkatiyar@gmail.com, chand@iitm.ac.in}\\
{\bf This work is a copyright material of IIT Madras and part of CHAPTER-3 of Ph.D. Thesis titled SHAPE PRESERVING RATIONAL AND COALESCENCE
FRACTAL INTERPOLATION FUNCTIONS AND
APPROXIMATION BY VARIABLE SCALING FRACTAL
FUNCTIONS by Dr. Saurabh Kumar Katiyar, July 2017.}
}
\maketitle
\abstract{This paper presents a description and analysis of a rational cubic spline FIF (RCSFIF) that has two shape parameters in each subinterval when it is defined implicitly. To be precise, we consider the iterated function system (IFS) with $q_n=\frac{P_n}{Q_n}$, $n \in \mathbb{N}_{N-1}$,
where $P_n(x)$ are cubic polynomials to be determined
through interpolatory conditions of the corresponding FIF and
$Q_n(x)$ are preassigned quadratic polynomials each containing
two free shape/rationality parameters. We establish the convergence
of the proposed RCSFIF $g$ to the original
function $\Phi \in \mathcal{C}^3(I)$ with respect to the uniform
norm. We also provide the sufficient conditions for an automatic selection of the rational IFS parameters to preserve monotonicity and convexity of a prescribed set of data points.
We consider some examples to illustrate the developed fractal
interpolation scheme and its shape preserving aspects.
%We establish the convergence
%of the proposed rational cubic spline FIF  to the original
%function $\Phi \in \mathcal{C}^3(I)$ with respect to the uniform
%norm.
}
\noindent\textbf{Keywords} Iterated Function System. Fractal Interpolation Functions. Rational cubic fractal functions. Rational cubic interpolation. Constrained Interpolation. Monotonicity. Convexity\\
\noindent\textbf{MSC}  28A80. 26C15. 41A20. 65D10. 41A29. 65D05
\section{Introduction}\label{HALDIASANGEETAsec1}
In some practical situations such as computer aided geometric design (CAGD), computer-aided design (CAD),
computer graphics (CG), scientific data visualization, information sciences, data are arising from complex functions or scientific phenomena. It is often required to generate a smooth function (practical shape-preserving interpolation spline) that interpolates a prescribed set of data and visualize positive, monotone and/or
convex set of data. Monotonicity plays important roles in various scientific problems such as approximation of copulas and quasi copulas in statistics, stress-strain relationship, rate of dissemination of drug in blood, dose-response curve, fuzzy logic, empirical option of pricing models in finance. Convexity plays a vital role in the theory of non-linear programming which arises in engineering and scientific applications such as  optimal control, parameter estimation, design, and approximation of functions. Rational cubic
splines have been successfully replaced the ordinary polynomials without changing the data in shape-preserving surroundings due to the fact that they possess less oscillatory nature, easiness and excellent asymptotic or tension properties. In recent years, a large number of approaches and achievements
have been reported for shape-preserving interpolation methods. Among a substantial amount of references concerning this topic, the reader is referred to (see, for instance, \cite{DQ,DQ1,GD85,FC,SH,SHN,SHH}, and references therein).
\par
Fractal interpolation function introduced by Barnsley \cite{B1} defined through IFS is a modern technique of interpolation that can retain irregularity or smoothness of prescribed data. The traditional nonrecursive interpolants (for instance, polynomial, spline, rational, trigonometric) are about constructing a very smooth function passing through a given data set, sometimes infinitely (piecewise) differentiable in each of the open subintervals determined by the knots. However, in several physical experiments such as financial series, seismic data, bioelectric recordings and Brownian motion, data arise from highly irregular curves and surfaces found in nature, and may not be generated from smooth functions. This served as a motivation for the development of new types of interpolation functions using fractal methodology. To broaden their horizons, some special class of fractal interpolants are introduced and their shape preserving aspects are investigated recently in the literature. As a submissive 
contribution to this goal, Chand and coworkers have initiated the study on shape preserving fractal interpolation and approximation using various families of polynomial and rational IFSs (see, for instance, \cite{CV2,CVN,CNVS}). These shape preserving fractal interpolation schemes are not well explored hitherto. The purpose of this paper is to present a kind of RCSFIF with two families of shape parameters. The associated IFS involves rational function of the form $\frac{P_n}{Q_n}$, $n \in \mathbb{N}_{N-1}$, where $P_n(x)$ are cubic polynomials to be determined through interpolatory conditions of the corresponding FIF and $Q_n(x)$ are preassigned quadratic polynomials each containing two free shape/rationality parameters. The attractor of the rational IFS in the graph of RCSFIF preserves the tension effects. However, the RCSFIF recovers the traditional rational interpolation scheme introduced by Sarfraz et al. \cite{SHH}, when the scaling factor in each subinterval is taken to be zero, corroborating the power 
of this methodology. A convergence analysis establishes an error bound and shows that the order of approximation is $O(h^3)$ accuracy. We provide the sufficient conditions for an automatic selection of the rational IFS parameters to preserve monotonicity and convexity of a prescribed set of data points. To obtain the visually desirable shape, scaling factors and shape parameters can be adjusted by using optimization techniques. The advantage of the proposed RCSFIF is that for prescribed data, one can have an infinite
number of shape preservating interpolants depending on the shape parameters (scaling factors) of the IFS. Therefore, without a doubt, the rational IFSs produce more versatile and flexible class of interpolating functions compared to the traditional non-recursive interpolation methods.
\par
The rest of this paper is organized as follows. In Section \ref{HALDIASANGEETAsec2}, we briefly recall some preliminary
notations and results. In Section \ref{HALDIASANGEETAsec3}, we construct RCSFIF with two family
of shape parameters. An upper bound for the interpolation error of the
developed RCSFIF is obtained and consequently the convergence analysis
is carried out in Section \ref{HALDIASANGEETAsec5}. Sufficient conditions for the proposed
interpolation spline to preserve the shape of the resulting
$\mathcal{C}^1$-RCSFIF is broached in Section \ref{HALDIASANGEETAsec4}.  Finally, illustrating particular cases to reflect the generality of this work
by numerical examples in Section \ref{HALDIASANGEETAsec6} and final comments are given in Section \ref{HALDIASANGEETAsec7}.
\section{Basics of fractal and fractal interpolation function}\label{HALDIASANGEETAsec2}
In this section we introduce the basic terminologies required for our work. For a more extensive treatment, the reader may consult \cite{B1,B2,PRM}.
\par
%\subsection{IFS for fractal functions}\label{HALDIASANGEETAsubsec2a}
For $r\in \mathbb{N}$, let  $\mathbb{N}_r$ denote the subset $\{1,2,\dots, r\}$ of $\mathbb{N}$. Consider a set of data points $\{(x_i, y_i) \in \mathbb{R}^2: i \in \mathbb{N}_N\}$ satisfying $ x_1<x_2<\dots<x_N$, $N>2$, be given. Set $I = [x_1,x_N]$, $I_n = [x_n,x_{n+1}]$ for $n \in \mathbb{N}_{N-1}$. Suppose $L_n: I \rightarrow I_n$, $n \in \mathbb{N}_{N-1}$ be contraction homeomorphisms such that
\begin{equation}\label{Chapter1eq1}
L_n(x_1) =x_n,~~L_n(x_N)=x_{n+1}.
\end{equation}
Let $0<r_n<1, n\in \mathbb{N}_{N-1}$, and $X:=I \times \mathbb{R}$. Let $N-1$ continuous mappings $F_n: X \to \mathbb{R}$ be given satisfying:
\begin{equation}\label{Chapter1eq2}
\vert F_n(x,y)-F_n(x,y^*)\vert \leq r_n \vert
 y-y^*\vert,~ F_n(x_1,y_1)=y_n,\ \ F_n(x_N,y_N)=y_{n+1},
\end{equation}
where $(x,y), (x,y^*)\in X$. Define functions $W_n: X \to I_n \times \mathbb{R}, \ W_n(x,y)=\big(L_n(x),F_n(x,y)\big)$
$\forall~ n \in \mathbb{N}_{N-1} $. For the IFS $\mathcal{I}=\{\mathbb{R}^2; W_n:n \in \mathbb{N}_N\}$, Barnsley \cite{B1} presented the following result.\\
\begin{theorem}\cite{B1}\label{Barnsleythm2}
%\begin{enumerate}
(i) $\exists$ a metric $d^*$ in $\mathbb{R}^2$ for which the IFS $\mathcal{I}$ is hyperbolic and $d^*$ is equivalent to Euclidean metric.\\
(ii) The IFS $\mathcal{I}$ admits a unique attractor $G,$ and $G$ is the graph  of a continuous function
$g:I\to \mathbb{R}$ which obeys $g(x_i)=y_i$\;for $i \in \mathbb{N}_N$.
%\item Let $\mathcal{G}:=\{g^*: I \rightarrow \mathbb{R}~|~ h~ \text{is continuous on}~ I,~ g^*(x_1)=y_1, g^*(x_N)=y_N\}$ be endowed with the
%uniform metric $\rho(g^*,h^*):= \max\{|g^*(x)-h^*(x)|: x \in I\}$. If
%$T:\mathcal{G} \rightarrow \mathcal{G}$ is defined by
%$Tg^*(x)=F_n\big(L_n^{-1}(x), g^*\circ L_n^{-1}(x)\big)$, $ x \in
%I_n$, $n \in \mathbb{N}_{N-1}$, and for all $g^* \in \mathcal{G}$, then $T$ has a unique fixed point $g$, and $g= \underset{k \rightarrow \infty}
%\lim T^k(g^*)$ for any $g^* \in \mathcal{G}$. Further, the fixed point
%$g$ is the function satisfying conditions given in (i).
%\end{enumerate}
\end{theorem}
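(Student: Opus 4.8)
The plan is to establish the two assertions in the usual two steps: first renorm $\mathbb{R}^2$ with a metric that makes each $W_n$ a contraction, which yields hyperbolicity and, through the Hutchinson operator, the unique attractor; then identify that attractor as the graph of a function produced as the fixed point of a Read--Bajraktar-style operator.

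For part (i), I would introduce the metric
\[
d^*\big((x,y),(x^*,y^*)\big) = |x - x^*| + \theta\,|y - y^*|, \qquad \theta > 0 \text{ to be fixed},
\]
whose equivalence to the Euclidean metric is immediate because all norms on $\mathbb{R}^2$ are equivalent. The heart of the argument is the contractivity of each $W_n$. Writing $a = \max_n \mathrm{Lip}(L_n) < 1$ (the $L_n$ are contraction homeomorphisms) and splitting
\[
|F_n(x,y) - F_n(x^*,y^*)| \le |F_n(x,y) - F_n(x,y^*)| + |F_n(x,y^*) - F_n(x^*,y^*)| \le r_n|y-y^*| + c\,|x-x^*|,
\]
where the first bound is the hypothesis \eqref{Chapter1eq2} and the second is the Lipschitz dependence of $F_n$ on its first argument, I obtain
\[
d^*\big(W_n(x,y),W_n(x^*,y^*)\big) \le (a + \theta c)\,|x-x^*| + r_n\,\big(\theta|y-y^*|\big) \le \max\{a+\theta c,\ r_n\}\, d^*\big((x,y),(x^*,y^*)\big).
\]
Choosing $\theta$ small enough that $a + \theta c < 1$ makes the overall contraction factor $\max\{a+\theta c,\ \max_n r_n\}$ strictly less than $1$, so $\mathcal{I}$ is hyperbolic. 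The induced Hutchinson map $\mathcal{W}(A) = \bigcup_n W_n(A)$ on the complete space of nonempty compact subsets of $\mathbb{R}^2$ under the Hausdorff metric derived from $d^*$ is then a contraction, and Banach's fixed point theorem gives the unique attractor $G$.

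For part (ii), I would realise $G$ as a graph through a Read--Bajraktarević operator. Let $\mathcal{G} = \{f \in \mathcal{C}(I): f(x_1) = y_1,\ f(x_N) = y_N\}$, complete under the uniform metric, and define $T$ by $(Tf)(x) = F_n\big(L_n^{-1}(x),\, f(L_n^{-1}(x))\big)$ for $x \in I_n$. The endpoint normalisations $F_n(x_1,y_1)=y_n$ and $F_n(x_N,y_N)=y_{n+1}$ are exactly what forces neighbouring pieces to agree at each interior knot $x_{n+1}$, so $Tf$ is continuous, and they also give $Tf \in \mathcal{G}$; the hypothesis \eqref{Chapter1eq2} yields $\|Tf - Th\|_\infty \le (\max_n r_n)\,\|f-h\|_\infty$, so $T$ is a contraction with a unique fixed point $g$ obeying the self-referential equation $g(L_n(x)) = F_n(x, g(x))$. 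Evaluating this relation at the endpoints propagates $g(x_i)=y_i$ to all $i$, and the graph of $g$ is $\mathcal{W}$-invariant, hence equals $G$ by uniqueness.

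The step demanding the most care is the contraction estimate in part (i): it genuinely requires Lipschitz (not merely continuous) dependence of $F_n$ on $x$, the one ingredient not literally contained in \eqref{Chapter1eq2}; for the concrete rational-cubic $F_n$ built in Section \ref{HALDIASANGEETAsec3} this holds, and in the general statement it is taken as a standing assumption. The remaining subtlety is the compatibility of the pieces of $Tf$ at the knots, where the endpoint conditions on $F_n$ must be used precisely; once these are secured, the completeness of $\mathcal{G}$, the contraction factors, and the identification graph $=$ attractor are all routine.
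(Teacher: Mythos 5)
Your proposal is correct and follows essentially the same route as the classical argument this paper imports: the theorem is stated here without proof (cited from Barnsley), but the paper's own follow-up discussion — the Read--Bajraktarevi\'{c} operator $T$ on $(\mathcal{G},\rho)$ with contraction factor $r^*=\max\{r_n : n\in\mathbb{N}_{N-1}\}$ and the self-referential equation $g(L_n(x))=F_n(x,g(x))$ — is exactly your part (ii), and the remetrization $d^*\big((x,y),(x^*,y^*)\big)=|x-x^*|+\theta|y-y^*|$ with $\theta$ small is the standard proof of part (i). Your explicit flagging of the Lipschitz-in-$x$ dependence of $F_n$ as the one ingredient not contained in the stated hypotheses is accurate and is the usual silent assumption behind part (i); it is harmless here because the paper's maps $F_n(x,y)=\alpha_n y+q_n(x)$ use rational $\mathcal{C}^1$ functions $q_n$ with nonvanishing denominators, which are Lipschitz on $I$.
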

\begin{definition}
The aforementioned function $g$  whose graph is the attractor of an IFS is called a FIF or a self-referential function corresponding to the IFS $\{X;\ W_n:n\in \mathbb{N}_{N-1}\}.$
\end{definition}
The above FIF $g$ is obtained as the fixed point of the Read-Bajraktarevi\'{c} operator $T$ on a complete metric space $(\mathcal{G}, \rho)$ defined as
\begin{equation}\label{Chapter1eq3}
(Tg^*) (x)=  F_n\left(L_n^{-1}(x), g^*\circ L_n^{-1}(x)\right)\;
\forall~ x\in I_n ,\; n\in \mathbb{N}_{N-1},
\end{equation}
where $\rho(g,g^*):= \max\{|g(x)-g^*(x)|: x \in I\}$.
It can be seen that $T$ is a contraction mapping on $(\mathcal{G}, \rho)$ with a contraction factor
$r^*:= \max\{r_n: n\in \mathbb{N}_{N-1}\}<1$. The fixed point of $T$ is the FIF $g$ corresponding to the IFS $\mathcal{I}$. Therefore, $g$ satisfies the functional equation:
\begin{equation}\label{Chapter1eq4}
g(x) = F_n\left(L_n^{-1}(x),g \circ L_n^{-1}(x)\right),\; x\in
I_n,\; n\in \mathbb{N}_{N-1},
\end{equation}
which is equivalent to
\begin{equation}\label{Chapter1eq4a}
g(L_n(x)) = F_n\left((x),g(x)\right),\; x\in
I,\; n\in \mathbb{N}_{N-1}.
\end{equation}
The most extensively studied FIFs in theory and applications  so
far are defined by the mappings:
\begin{equation}\label{Chapter1eq5}
L_n(x)=a_nx+b_n,~  F_n(x,y)=\alpha_n y + q_n(x),~
n\in \mathbb{N}_{N-1}.
\end{equation}
where $|\alpha_n|<1$, the real parameter $\alpha_n$ is called a scaling factor of the transformation $W_n$, and $\alpha = (\alpha_1, \alpha_2, \dots,
\alpha_{N-1})$ is the scale vector corresponding to the IFS. Here $q_n : I \to \mathbb{R}$ are suitable continuous functions so that the maps $F_n$ satisfy conditions in (\ref{Chapter1eq2}). The coefficients $a_n$ and $b_n$ of the affine maps $L_n$ are determined through the conditions given in (\ref{Chapter1eq1}) as
$$a_n=\frac{x_{n+1}-x_n}{x_N-x_1},~~~~ b_n=\frac{x_n
x_N-x_{n+1}x_1}{x_N-x_1}.$$
\subsection{Differentiable Fractal Interpolation Functions}\label{DiffeFIFs}
\noindent For a prescribed data set, a FIF with
$\mathcal{C}^k$-continuity is obtained as the fixed point of IFS
(\ref{Chapter1eq5}), where the scaling factors $\alpha_n$ and the
functions $q_n$ are chosen according to the following theorem.
\begin{theorem}\cite{B2}\label{BH}
Let $\{(x_i,y_i):i \in \mathbb{N}_N\}$ be a given data set with
strictly increasing abscissae. Let $L_n(x) = a_n x + b_n$
satisfies (\ref{Chapter1eq1}) and $F_n(x,y)=\alpha_n y + q_n(x)$
obeys (\ref{Chapter1eq2}) for $n\in \mathbb{N}_{N-1}$. Suppose that for
some integer $k\geq0$, $|\alpha_n|< a_n^k$ and $q_n\in \mathcal{C}^k(I)$,
$n\in \mathbb{N}_{ N-1}$. Let
$$F_{n,p}(x,y)=\frac{\alpha_n y+ q_n^{(p)}(x)}{a_n^p}, \;\; y_{1,p}
= \frac
{q_1^{(p)}(x_1)}{a_1^p-\alpha_1},\:y_{N,p}=\frac{q_{N-1}^{(p)}(x_N)}{a_{N-1}^p-\alpha_{N-1}},
~p\in \mathbb{N}_k.$$
If $ F_{n-1,p}(x_N, y_{N,p})=F_{n,p}(x_1,y_{1,p})$  for $n =2,3,
\dots,N-1$ and  $p \in \mathbb{N}_k$, then the IFS $\big\{I \times
\mathbb{R};\big(L_n(x),F_n(x,y)\big):n\in \mathbb{N}_{N-1}\big\}$
determines a FIF $g\in \mathcal{C}^k(I)$. Further, $g^{(p)}$ is the FIF
determined by $\big\{I \times
\mathbb{R};\big(L_n(x),F_{n,p}(x,y)\big):n \in \mathbb{N}_{N-1}\big\}$
for $ p \in \mathbb{N}_k$.
\end{theorem}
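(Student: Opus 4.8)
The plan is to prove the assertion by induction on the differentiation order $p$, showing that $g$ possesses a continuous $p$-th derivative that coincides with the fixed point $g_p$ of the Read--Bajraktarevi\'{c} operator $T_p$ attached to the maps $F_{n,p}$, the case $p=0$ being exactly Theorem \ref{Barnsleythm2}. The first ingredient is the contractivity of each $T_p$. Since $0<a_n<1$ and $p\le k$, the hypothesis $|\alpha_n|<a_n^k$ yields $|\alpha_n|<a_n^k\le a_n^p$, so the $y$-Lipschitz factor $|\alpha_n|/a_n^p$ of $F_{n,p}$ is strictly less than $1$; thus $T_p$ is a contraction on $(\mathcal{C}(I),\rho)$ and admits a unique fixed point $g_p$. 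To know that $g_p$ is genuinely a continuous FIF I would verify that the $F_{n,p}$ satisfy the analogues of (\ref{Chapter1eq1})--(\ref{Chapter1eq2}): because $x_1=L_1(x_1)$ and $x_N=L_{N-1}(x_N)$ are the fixed points of $L_1$ and $L_{N-1}$, a short computation shows that the prescribed values $y_{1,p}$ and $y_{N,p}$ are precisely the forced endpoint values $F_{1,p}(x_1,y_{1,p})=y_{1,p}$ and $F_{N-1,p}(x_N,y_{N,p})=y_{N,p}$, while the join hypothesis $F_{n-1,p}(x_N,y_{N,p})=F_{n,p}(x_1,y_{1,p})$ makes the common interior value $y_{n,p}$ well defined; Theorem \ref{Barnsleythm2} then applies to the IFS $\{I\times\mathbb{R};(L_n,F_{n,p})\}$ and delivers the continuous FIF $g_p$ interpolating $\{(x_i,y_{i,p})\}$.

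The analytic core is an intertwining relation between the operators $T=T_0$ and $T_p$. I would fix a Hermite-type polynomial $\phi_0\in\mathcal{C}^k(I)$ with $\phi_0^{(p)}(x_1)=y_{1,p}$ and $\phi_0^{(p)}(x_N)=y_{N,p}$ for all $p\le k$ (such a $\phi_0$ exists, a degree $2k+1$ polynomial suffices), and consider the subspace of $\mathcal{C}^k(I)$ cut out by these endpoint conditions. On each $I_n$ one has $(T\phi)(x)=\alpha_n\phi(L_n^{-1}(x))+q_n(L_n^{-1}(x))$, which is $\mathcal{C}^k$ there since $\phi,q_n\in\mathcal{C}^k$ and $L_n^{-1}$ is affine; differentiating $p$ times and using $\tfrac{d}{dx}L_n^{-1}(x)=1/a_n$ gives the key identity $(T\phi)^{(p)}(x)=\dfrac{\alpha_n\phi^{(p)}(L_n^{-1}(x))+q_n^{(p)}(L_n^{-1}(x))}{a_n^p}=(T_p\phi^{(p)})(x)$. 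Evaluating this at an interior knot from the right ($L_n^{-1}(x_n)=x_1$) and from the left ($L_{n-1}^{-1}(x_n)=x_N$) reduces the matching of $p$-th derivatives across the knot exactly to the join hypothesis, so $T$ preserves both $\mathcal{C}^k$-smoothness and the endpoint conditions. Hence the iterates $\phi_m:=T^m\phi_0$ all lie in $\mathcal{C}^k(I)$, and the identity iterates to $\phi_m^{(p)}=(T^m\phi_0)^{(p)}=T_p^m\big(\phi_0^{(p)}\big)$.

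Finally I would pass to the limit. Because each $T_p$ is a contraction, $\phi_m^{(p)}=T_p^m(\phi_0^{(p)})\to g_p$ uniformly on $I$ for every $p\le k$; in particular $\phi_m\to g$ uniformly. Invoking the classical theorem that a uniformly convergent sequence of $\mathcal{C}^1$ functions whose derivative sequence also converges uniformly has a differentiable limit with the expected derivative, and applying it successively for $p=1,\dots,k$ to the pairs $\big(\phi_m^{(p-1)},\phi_m^{(p)}\big)$, yields $g^{(p)}=g_p$ for all $p\le k$, with $g^{(k)}=g_k$ continuous, whence $g\in\mathcal{C}^k(I)$. I expect the main obstacle to be this junction analysis: one cannot apply the contraction principle directly in a $\mathcal{C}^k$-norm, since the space of $\mathcal{C}^k$ functions under the sup metric is not complete, so the smoothness of $g$ must be extracted indirectly from the separate uniform convergence of each derivative sequence $\phi_m^{(p)}$, and the $\mathcal{C}^k$-matching of the iterates at the interior knots is secured solely by the stated join conditions together with the endpoint values $y_{1,p},y_{N,p}$.
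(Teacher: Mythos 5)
The paper itself states Theorem \ref{BH} without proof, quoting it from Barnsley and Harrington \cite{B2}, so the comparison must be made against that source rather than an internal argument. Your proof is correct: the contraction estimate $|\alpha_n|/a_n^p\le|\alpha_n|/a_n^k<1$ (valid since $a_n<1$ because $N>2$), the observation that $y_{1,p}=q_1^{(p)}(x_1)/(a_1^p-\alpha_1)$ is exactly the fixed point of $y\mapsto F_{1,p}(x_1,y)$ (and similarly at $x_N$), the intertwining identity $(T\phi)^{(p)}=T_p\bigl(\phi^{(p)}\bigr)$ obtained by differentiating through the affine maps, the reduction of $\mathcal{C}^k$-matching at interior knots to the join hypothesis $F_{n-1,p}(x_N,y_{N,p})=F_{n,p}(x_1,y_{1,p})$, and the final passage to the limit via the classical theorem on uniform convergence of derivatives all check out, and together they deliver $g^{(p)}=g_p$ for $p\in\mathbb{N}_k$ as claimed. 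This is, however, a genuinely different route from \cite{B2}: Barnsley and Harrington argue in the opposite direction through the ``calculus'' that names their paper, first proving that the indefinite integral of a FIF is again a FIF for a related IFS (with scaling factors $a_n\alpha_n$ and modified $q_n$), and then obtaining the theorem by integrating the FIF $g_k$ determined by $\{(L_n,F_{n,k})\}$ back up $k$ times, the boundary values $y_{1,p},y_{N,p}$ emerging as the constants of integration forced by the join conditions. Their approach buys an integral calculus for fractal functions of independent interest; yours is the direct operator-theoretic proof (common in the later FIF literature) and has the advantage of exhibiting each intermediate derivative $g^{(p)}$ simultaneously as the attractor of its own IFS, which is precisely the form in which the present paper uses the theorem in Section \ref{HALDIASANGEETAsec3}.

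One small correction to your closing remark: the claim that the contraction principle cannot be applied directly in a $\mathcal{C}^k$ setting is overstated. While $\mathcal{C}^k(I)$ is indeed incomplete under the sup metric, the affine subspace $\mathcal{F}=\{\phi\in\mathcal{C}^k(I):\phi^{(p)}(x_1)=y_{1,p},\ \phi^{(p)}(x_N)=y_{N,p},\ 0\le p\le k\}$ is complete under $\rho_k(\phi,\psi)=\max_{0\le p\le k}\|\phi^{(p)}-\psi^{(p)}\|_{\infty}$, and your own intertwining identity shows that $T$ maps $\mathcal{F}$ into itself (this is exactly where the endpoint and join conditions enter) and contracts $\rho_k$ with factor $\max_n|\alpha_n|/a_n^k<1$; the fixed point is then a $\mathcal{C}^k$ FIF at once. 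Your iteration-from-a-Hermite-seed argument is a correct substitute that in effect reproves Banach's theorem by hand in this instance, so this is a matter of economy rather than a gap.
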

To get a rational  FIF with $\mathcal{C}^{k}$-continuity, $q_n(x)$ is taken as
$\frac{P_n(x)}{Q_n(x)}$, where $P_n(x)$, $Q_n(x)$
are suitably chosen polynomials in $x$ of degree $M, N$ respectively, and
$Q_n(x) \neq 0 $ for every $ x\in [x_1, x_N]$. Then using condition of Theorem \ref{BH}, the existence of smooth rational FIF is proposed in \cite{CVN}. This completes our preparations for the current study, and we are now ready for our main section.
\section{$\mathcal{C}^1$-RCSFIF with Two-Families of Shape Parameters}\label{HALDIASANGEETAsec3}
Let $\{(x_i,y_i,d_i)\in\mathbb{R}^3: i\in \mathbb{N}_N\}$, $x_1<x_2<\dots<x_N$, be a given set of Hermite data points. The desired RCSFIF with two families of shape parameters can be obtained by the IFS given in (\ref{Chapter1eq5}) with
$$q_n(x)\equiv
q_n^*(\theta)=\frac{U_n (1-\theta)^3 + V_n (1-\theta)^2  \theta+ W_n (1-\theta) \theta^2  + Z_n \theta^3 }{u_n +v_n \theta(1-\theta)},\:
\theta = \frac{x-x_1}{x_N-x_1}.$$
%,\: x \in I
With this special choice of $q_n(x)$, the Read-Bajraktarevi\'{c} operator $T$ (cf. (\ref{Chapter1eq3})) has a unique fixed point $g \in \mathcal{G}$,  which satisfies
\begin{equation}\label{HALDIASANGEETAeq6}
\begin{split}
g\big(L_n(x)\big) &= F_n\big(x, g(x)\big)= \alpha_n g(x)+q_n(x),\\
 &= \alpha_n g(x) + \frac{U_n (1-\theta)^3 + V_n (1-\theta)^2  \theta+ W_n (1-\theta) \theta^2  + Z_n \theta^3 }{u_n +v_n \theta(1-\theta)}.
\end{split}
\end{equation}
The conditions $F_n(x_1,y_1)=y_n$, $F_n(x_N,y_N)=y_{n+1}$ can be
reformulated as the interpolation conditions $g(x_n)=y_n$,
$g(x_{n+1})=y_{n+1}$, $ n\in \mathbb{N}_{N-1}$. The interpolatory conditions determine the coefficients $U_n$ and $Z_n$ as follows.
Substituting $x = x_1$ in (\ref{HALDIASANGEETAeq6}), we get
\begin{equation*}
g\big(L_n(x_1)\big)  = \alpha_n  g(x_1) + \frac{U_n}{u_n}
\implies  y_n  = \alpha_n y_1 + \frac{U_n}{u_n}  \implies  U_n
=u_n(y_n-\alpha_n y_1).
\end{equation*}
Similarly, taking $x = x_N$ in (\ref{HALDIASANGEETAeq6}) we obtain $ Z_n  =
u_n(y_{n+1}-\alpha_n y_N)$. \\Now we make $g\in \mathcal{C}^1(I)$  by imposing the conditions prescribed in Theorem \ref{BH}.\\
By hypothesis, $|\alpha_n | \le \kappa a_n$, $ n\in \mathbb{N}_{N-1}$, where $0\le \kappa < 1$. We also have $q_n \in \mathcal{C}^{1}(I)$. Adhering to the notation of Theorem \ref{BH}, for $ n\in \mathbb{N}_{N-1}$, we let\\
\begin{equation*}
\begin{split}
F_{n,1}(x, y)&= \frac{\alpha_n  y+ q_n^{(1)}(x)}{a_n},\\
y_{1,1}&=d_1,\; y_{N,1}=d_N,\; F_{n,1}(x_1, d_1)=d_n,\;
F_{n,1}(x_N,d_N)=d_{n+1}.
\end{split}
\end{equation*}
Then by Theorem \ref{BH}, the FIF $g \in
\mathcal{C}^1(I)$. Further, $g^{(1)}$ is the fractal function
determined by the IFS $\mathcal {I}^*\equiv\big\{\mathbb{R}^2;\big(L_n(x),
F_{n,1}(x,y)\big): n \in \mathbb{N}_{N-1}\big\}$. Consider $\mathcal{G^*} := \{h^*\in \mathcal{C}(I): h^*(x_1)=d_1~\text{and}~h^*(x_N)=d_N\}$ endowed
with the uniform metric. The IFS $\mathcal{I}^*$ induces a
contraction map $T^*: \mathcal{G^*} \rightarrow \mathcal{G^*}$ defined
by $(T^*g^*)\big(L_n(x)\big)=F_{n,1}\big(x, g^*(x)\big),\; x \in I.$
The fixed point of $T^*$ is $g^{(1)}$. Consequently, $g^{(1)}$
satisfies the functional equation:
\begin{equation}\label{HALDIASANGEETAeq7}
g^{(1)}\big(L_n(x)\big) = F_{n,1}\big(x, g^{(1)}(x)\big)
 = \frac{\alpha_n  g^{(1)}(x) + q_n^{(1)}(x)}{a_n}.
\end{equation}
The  conditions $F_{n,1}(x_1,d_1)=d_n$ and
$F_{n,1}(x_N,d_N)=d_{n+1}$ can be reformulated as the
interpolation conditions for the derivative:
$g^{(1)}(x_n)=d_n$ and $g^{(1)}(x_{n+1})=d_{n+1}$, $ n\in \mathbb{N}_{N-1}$.
Applying $x = x_1$ in (\ref{HALDIASANGEETAeq7}), we obtain
\begin{equation*}
\begin{split}
g^{(1)}(L_n(x_1)) &  = \frac{\alpha_n}{a_n}  g^{(1)}(x_1) + \frac{u_nV_n - (3u_n+v_n) U_n}{u_n^2 h_n},\\
\implies   V_n & =  (3u_n+v_n)(y_n-\alpha_ny_1)+u_n h_nd_n-\alpha_n u_n (x_N-x_1) d_1.
\end{split}
\end{equation*}
Similarly, the substitution  $x = x_N$ in (\ref{HALDIASANGEETAeq7}) yields
\begin{center}
$W_n  = (3u_n+v_n)(y_{n+1}-\alpha_ny_N)-u_n h_nd_{n+1}+\alpha_n u_n (x_N-x_1) d_N$.
\end{center}
These values of $U_n, V_n, W_n$, and $Z_n$ reformulate the  desired $\mathcal{C}^1$-rational cubic spline FIF (\ref{HALDIASANGEETAeq6}) to the following:
\begin{equation}\label{HALDIASANGEETAeq8}
g\big(L_n(x)\big) = \alpha_n g(x) + \frac{P_n(x)} {Q_n(x)},
\end{equation}
$P_n(x)\equiv P_n^* (\theta) =   u_n(y_n-\alpha_n y_1) (1-\theta)^3 + \{(3u_n+v_n)(y_n-\alpha_ny_1)+u_n h_nd_n-\alpha_n u_n (x_N-x_1) d_1\} (1-\theta)^2  \theta+
 \{(3u_n+v_n)(y_{n+1}-\alpha_ny_N)-u_n h_nd_{n+1}+\alpha_n u_n (x_N-x_1) d_N\} (1-\theta) \theta^2  + u_n(y_{n+1}-\alpha_n y_N) \theta^3,$\\
$Q_n(x)\equiv Q_n^*(\theta) = u_n +v_n \theta(1-\theta),~ \theta = \frac{x-x_1}{x_N - x_1} $.\\\\
Since the FIF $g$ in (\ref{HALDIASANGEETAeq8}) is derived as the
fixed point of  $T$, it is unique for a fixed choice of
the scaling factors and the shape parameters.
\begin{remark}\label{HALDIASANGEETArem1}(Interval tension property) Let $ \triangle_n = \dfrac{y_{n+1} - y_n}{h_n}$.  (\ref{HALDIASANGEETAeq8}) can be expressed as
\begin{eqnarray}\label{HALDIASANGEETAeq8a}
g(L_n(x)) = &\alpha_n g(x) + (y_n -\alpha_n y_1) (1-\theta)+
(y_{n+1}-\alpha_n y_N)\theta \\& +  \dfrac{u_nh_n \theta (1-\theta)\big [(2\theta-1)\triangle_n^*
+ (1-\theta)d^*_n-\theta d^*_{n+1}\big ]}{Q_n(\theta)} \nonumber,
\end{eqnarray}
where $d^*_n = d_n -  \frac {\alpha_n d_1}{a_n},~
d^*_{n+1} = d_{n+1} -  \frac {\alpha_n d_N}{a_n},~
\triangle^*_n = \triangle_n - \alpha_n \frac{y_N -y_1}{h_n}$.
When $v_n \rightarrow \infty$ in (\ref{HALDIASANGEETAeq8a}),
$g$ converges to the following affine FIF :
\begin{equation}\label{HALDIASANGEETAeq8b}
g(L_n(x)) = \alpha_n g(x) + (y_n -\alpha_n y_1) (1-\theta)
+ (y_{n+1}-\alpha_n y_N)\theta.
\end{equation}
Again if $\alpha_n \rightarrow 0^+$ with $v_n\rightarrow \infty,$
then the rational cubic FIF modifies to  the classical affine interpolant. Hence,
the shape parameter $v_n$ has a vital influence on the graphical
display of data while $u_n$ can assume any positive value. The increase in
the value of parameter $v_n$ in $[x_n,x_{n+1}]$ transforms the rational cubic
functions to the straight line $y_n(1-\theta)+y_{n+1}\theta$.
\end{remark}
\begin{remark}\label{HALDIASANGEETArem2} If $\alpha_n =0$, $ n\in \mathbb{N}_{N-1},$ then the resulting  RCSFIF coincides with the piecewise defined nonrecursive classical  rational cubic interpolant $C$ as
\begin{equation}\label{HALDIASANGEETAeq9}
g(L_n(x)) =\frac{P_n^* (\theta)}{Q_n^* (\theta)},
\end{equation}
where
$P_n^* (\theta) =  u_n y_n(1-\theta)^3+[(3u_n+v_n)y_n+u_n h_n d_n]
(1-\theta)^2 \theta+[(3u_n+v_n)y_{n+1}-u_n h_n d_{n+1}](1-\theta) \theta^2+u_n y_{n+1}\theta^3$,~
$Q_n^*(\theta) = u_n +v_n \theta(1-\theta)$.
Since $\frac {L_n^{-1}(x) -x_1}{x_N-x_1}= \frac{x-x_n}{h_n}=\rho$, from (\ref{HALDIASANGEETAeq9}), for $x \in I_n=[x_n, x_{n+1}]$, we have
\begin{equation}\label{HALDIASANGEETAeq10}
%\begin{split}
g(x) =  \frac{P_n^*(\rho)}{Q_n^*(\rho)} \equiv C_n(x)~ (say).
%\end{split}
\end{equation}
where  $\rho$ is a localized variable. The
rational cubic spline $C\in \mathcal{C}^1(I)$ is defined by
$C\big|_{I_n}=C_n$, $n \in \mathbb{N}_{N-1}$. This illustrates that if we let  $\alpha_n\rightarrow 0$, then the graph of our rational cubic FIF on $[x_n,x_{n+1}]$ approaches the graph of the classical rational cubic interpolant described by Sarfraz and Hussain \cite{SHH}.
\end{remark}
\begin{remark}\label{HALDIASANGEETArem3} It is interesting to note that when
$u_n = 1, v_n = 0 ~ \text{and}
~|\alpha_n|\le \kappa a_n $ for $ n\in \mathbb{N}_{N-1}$, $\kappa \in (0,1)$, in (\ref{HALDIASANGEETAeq8}) then the resulting RCSFIF coincides with the $\mathcal{C}^1$-cubic Hermite FIF \cite{CV2}. If we take $ u_n =1, v_n = 0 ~ \text{and}
~\alpha_n=0$, we obtain for $ x \in [x_n,x_{n+1}]$,~
$g(x) = (2 \theta^3-3\theta^2+1) y_n + (\theta^3-2\theta^2+\theta)h_n d_n+(-2 \theta^3+3\theta^2)y_{n+1}+(\theta^3-\theta^2)h_n d_{n+1}.$
Hence $g$ recovers the classical piecewise $\mathcal{C}^1$-cubic Hermite interpolant over $I$.
\end{remark}
\section{Convergence  Analysis of RCSFIFs}\label{HALDIASANGEETAsec5}
In this section, the uniform error bound for a RCSFIF $g$ is obtained from the Hermite data  $\{(x_i,y_i,d_i): i \in \mathbb{N}_N\}$ satisfying $x_1<x_2<\dots<x_N$, being interpolated and generated from a function $\Phi \in\mathcal{C}^3(I)$. By using $\|\Phi- g\|_{\infty} \le \|\Phi - C\|_{\infty} + \|C - g\|_{\infty}$, we will derive the convergence of $g$ to the original function $\Phi$ using the convergence results for its classical counterpart $C$ and the uniform distance between $g$ and $C$. The first summand in the above inequality is obtained from Theorem 7.1 of \cite{SHH} as $\|\Phi-C\|_\infty \leq \frac{1}{2}\|\Phi^{(3)}\|_\infty \underset{1\leq i\leq {N-1}}\max \{h_n^3 c_n\}$, for some suitable constant $c_n$ independent of $h_n$. The rightmost summand is obtained by using the definition of the Read-Bajraktarevi\'{c} operators for which $g$ is a fixed point and by applying the Mean Value Theorem. To make our presentation simple, we introduce the following notations: $|y|_{\infty} = \max \{|y_n| : n 
\in \mathbb{N}_N\} $, $ |d|_{\infty} = \max \{|d_n| : n \in \mathbb{N}_N \} $, $ |u|_{\infty} = \max \{
|u_n| : n \in \mathbb{N}_{N-1} \}$, $ |v|_{\infty} = \max \{
|v_n| : n \in \mathbb{N}_{N-1} \}$, $|\alpha|_\infty=\max\{|\alpha_n|: n \in \mathbb{N}_{N-1}\}$, $ h = \max \{h_n : n \in \mathbb{N}_{N-1} \} $. The proof is just consequent upon strictly routine matter of simple calculations.
\begin{theorem}\label{perturerror}
Let $\Phi\in \mathcal{C}^3(I)$ be the original function, $g$ be the RCSFIF for $\Phi$ with respect to the
interpolation data $\{(x_i,y_i,d_i) : i \in \mathbb{N}_N\}$. Let the function $q_n$ involved in the IFS generating the FIF $g$ satisfies $\big|\dfrac{\partial q_n(\tau_n,u_n,v_n, \rho)}{\partial
\alpha_n} \big| \le K_0 $ for $|\tau_n| \in (0, a_n)$, all
$n \in \mathbb{N}_{N-1}$, and for some real constant $K_0$. Then,
\begin{eqnarray*}
\|\Phi-g\|_\infty &\leq \frac{1}{2}\|\Phi^{(3)}\|_\infty h^3 c +\frac{|\alpha|_\infty}{s(1-|\alpha|_\infty)}\Big\{|u|_\infty M+
\frac{1}{4}\big[(3|u|_\infty+|v|_\infty)M+|u|_\infty\times\\& (h
|d|_\infty+|I|\max\{|d_1|,|d_N|\})\big]\Big\},
\end{eqnarray*}
where $M=|y|_\infty+ \max\{|y_1|,|y_N|\}$,   $ s = \min\{ s_n : n \in \mathbb{N}_{N-1} \}$ with $s_n =u_n+\frac{1}{4}v_n$, $ |u|_{\infty} = \max \{
|u_n| : n \in \mathbb{N}_{N-1} \}$, $ |v|_{\infty} = \max \{
|v_n| : n \in \mathbb{N}_{N-1} \}$.
\end{theorem}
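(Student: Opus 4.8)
The plan is to route the estimate through the classical nonrecursive interpolant $C$ of Remark~\ref{HALDIASANGEETArem2} (the $\alpha_n\to 0$ limit of $g$) and to split $\|\Phi-g\|_\infty\le \|\Phi-C\|_\infty+\|C-g\|_\infty$ by the triangle inequality. The first summand is supplied verbatim by Theorem~7.1 of \cite{SHH}, giving $\|\Phi-C\|_\infty\le\tfrac12\|\Phi^{(3)}\|_\infty h^3 c$ with $c=\max_n c_n$; so the entire task reduces to estimating the fractal perturbation $\|C-g\|_\infty$, i.e.\ the price paid for switching on the scale vector.

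For that term I would use that $g$ is the fixed point of the Read--Bajraktarevi\'c operator $T$ of (\ref{Chapter1eq3}) with contractivity factor $|\alpha|_\infty$, while $C$ satisfies the \emph{same} functional equation evaluated at the zero scale vector, so that $C(L_n(x))=q_n(x,0)$ and $g(L_n(x))=\alpha_n g(x)+q_n(x,\alpha_n)$ from (\ref{HALDIASANGEETAeq8}). Applying $T$ to $C$ and subtracting $C$ gives, for $x\in I$,
\[
(TC-C)(L_n(x))=\alpha_n C(x)+\big(q_n(x,\alpha_n)-q_n(x,0)\big)=\alpha_n\Big(C(x)+\tfrac{\partial q_n}{\partial\alpha_n}(x,\tau_n)\Big),
\]
where the Mean Value Theorem produces an intermediate $\tau_n$; since $|\alpha_n|\le\kappa a_n<a_n$ this $\tau_n$ lies in $(0,a_n)$, so the hypothesis $\big|\tfrac{\partial q_n}{\partial\alpha_n}(\cdot,\tau_n)\big|\le K_0$ applies. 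As $C$ is not itself a fixed point of $T$, the standard comparison $\|C-g\|_\infty=\|C-Tg\|_\infty\le\|C-TC\|_\infty+\|TC-Tg\|_\infty\le\|TC-C\|_\infty+|\alpha|_\infty\|C-g\|_\infty$ then yields $\|C-g\|_\infty\le\frac{|\alpha|_\infty}{1-|\alpha|_\infty}\,\sup_{n,x}\big|C(x)+\tfrac{\partial q_n}{\partial\alpha_n}(x,\tau_n)\big|$, which already exhibits the factor $\tfrac{|\alpha|_\infty}{1-|\alpha|_\infty}$ of the claimed bound.

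It then remains to show that the perturbation supremum is dominated by the bracket $\tfrac1s\{|u|_\infty M+\tfrac14[(3|u|_\infty+|v|_\infty)M+|u|_\infty(h|d|_\infty+|I|\max\{|d_1|,|d_N|\})]\}$. Here I would place $C(x)+\tfrac{\partial q_n}{\partial\alpha_n}$ over the common denominator $Q_n=u_n+v_n\theta(1-\theta)$ and compute the $\alpha_n$-derivatives of the coefficients ($\partial_{\alpha_n}U_n=-u_ny_1$, $\partial_{\alpha_n}V_n=-(3u_n+v_n)y_1-u_n|I|d_1$, and their mirror images at the right end); the numerator collapses to a cubic in $\theta$ whose coefficients are precisely the data differences $u_n(y_n-y_1)$, $(3u_n+v_n)(y_n-y_1)+u_nh_nd_n-u_n|I|d_1$, and so on. Each of these is controlled by $M=|y|_\infty+\max\{|y_1|,|y_N|\}$ together with $h|d|_\infty$ and $|I|\max\{|d_1|,|d_N|\}$, while the two elementary denominator estimates $\tfrac1{Q_n}\le\tfrac1s$ and $\tfrac{\theta(1-\theta)}{Q_n}\le\tfrac1{4s}$ (the latter from monotonicity of $t\mapsto t/(u_n+v_nt)$ on $[0,\tfrac14]$) separate the ``linear'' contributions from the ``tension'' contributions carrying the factor $\theta(1-\theta)$. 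Passing to the maxima $|u|_\infty,|v|_\infty$ assembles the bracket, and multiplying by $\tfrac{|\alpha|_\infty}{1-|\alpha|_\infty}$ and adding the classical term completes the proof.

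I expect the genuine obstacle to be exactly this last step: the bookkeeping of the many terms generated by the quotient rule, the regrouping of the numerator so that the data \emph{differences} (and hence $M$) emerge in place of the raw data, and the careful separation of the terms to which $\tfrac1{Q_n}$ applies from those to which $\tfrac{\theta(1-\theta)}{Q_n}$ applies, so that the single uniform constant $s$ enters with the correct power. The triangle split, the contraction estimate, and the Mean Value step are all soft by comparison.
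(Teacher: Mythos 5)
Your overall architecture coincides with the paper's own proof: the triangle split $\|\Phi-g\|_\infty\le\|\Phi-C\|_\infty+\|C-g\|_\infty$, the appeal to Theorem 7.1 of \cite{SHH} for the first summand, the contraction estimate $\|T_{\alpha}g-T_{\alpha}C\|_\infty\le|\alpha|_\infty\|g-C\|_\infty$ together with the Mean Value Theorem in the scaling variable (producing $\tau_n$ with $|\tau_n|\in(0,a_n)$), and the resulting factor $\frac{|\alpha|_\infty}{1-|\alpha|_\infty}$ are exactly the paper's steps. But your final assembly step contains a genuine error: you cannot place $C$ and $\frac{\partial q_n}{\partial\alpha_n}$ over the common denominator $Q_n^*(\theta)=u_n+v_n\theta(1-\theta)$, and the numerator does \emph{not} collapse to the data differences $u_n(y_n-y_1)$, $(3u_n+v_n)(y_n-y_1)+u_nh_nd_n-u_n|I|d_1$, etc. In the perturbation term $\alpha_n C\big(L_n^{-1}(x)\big)+q_n(\alpha_n,u_n,v_n,\phi)-q_n(0,u_n,v_n,\phi)$ of (\ref{HALDIASANGEETAeq10c}), the argument $L_n^{-1}(x)$ of $C$ sweeps the \emph{whole} interval $I$, so $C$ there is given piecewise by (\ref{HALDIASANGEETAeq10}): on $I_m$ it is $P_m^*(\rho_m)/Q_m^*(\rho_m)$ with a local variable $\rho_m$, shape parameters $u_m,v_m$, and data $y_m,y_{m+1},d_m,d_{m+1}$, where in general $m\ne n$. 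The single global rational expression in $\theta$ over $Q_n^*(\theta)$ with coefficients $u_ny_n$, etc., that your collapse presumes is $C\big(L_n(x)\big)$ (the zero-scale fixed-point equation (\ref{HALDIASANGEETAeq9})), not $C\big(L_n^{-1}(x)\big)$; the two pieces simply do not share a denominator, an index, or even a variable.

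The repair is exactly what the paper does, and it costs nothing in the final constant: bound the two contributions separately, $|C(L_n^{-1}(x))|\le\|C\|_\infty$ and $\big|\frac{\partial q_n}{\partial\alpha_n}\big|\le K_0$, each by the extremum calculation you describe, using $(1-\rho)^3+\rho^3\le 1$ and $\rho(1-\rho)^2+\rho^2(1-\rho)=\rho(1-\rho)\le\frac14$. Since $|y_n|\le|y|_\infty$ feeds $\|C\|_\infty$ while $|y_1|,|y_N|\le\max\{|y_1|,|y_N|\}$ feeds $K_0$, the sum $\|C\|_\infty+K_0$ yields precisely the bracket with $M=|y|_\infty+\max\{|y_1|,|y_N|\}$; note this is also what your (invalid) difference coefficients would have produced after a triangle inequality $|y_n-y_1|\le M$, so the theorem's bound survives---only the common-denominator route to it does not. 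One incidental point in your favor: your justification $\theta(1-\theta)/Q_n^*(\theta)\le\frac{1}{4s_n}$ via monotonicity of $t\mapsto t/(u_n+v_nt)$ on $[0,\frac14]$ is actually cleaner than the paper's blanket claim $Q_n^*(\rho)\ge s_n$, which for $v_n>0$ fails near $\rho\in\{0,1\}$ (there $Q_n^*=u_n<s_n$); for the non-tension terms one only has $1/Q_n^*\le 1/u_n$, a point both you and the paper gloss over.
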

\begin{proof}
Let $g$ and $C$, respectively, be the rational cubic spline FIF and the traditional nonrecursive cubic interpolant to the data $\{(x_i,\Phi(x_i)) : i \in \mathbb{N}_N\}$. By the triangle inequality
\begin{equation}\label{HALDIASANGEETAeq10a}
\|\Phi-g\|_{\infty} \le \|\Phi - C\|_{\infty} + \|C - g\|_{\infty}.
\end{equation}
We obtain rightmost summand in  (\ref{HALDIASANGEETAeq10a}) by the definition of the Read-Bajraktarevi\'{c} operators for which
$g$ is a fixed point and by applying the Mean Value Theorem. For a prescribed data set and  $\alpha_i$
satisfying $ |\alpha_n| \le  a_n,  n \in \mathbb{N}_{N-1}$, the RCSFIF $g \in$ $\mathcal{C}^1(I)$ is the fixed point of
the Read-Bajraktarevi\'{c} operator $T_{\alpha}$:
\begin{equation}\label{HALDIASANGEETAeq10c}
  (T_{\alpha}g)(x) = \alpha_n g\big(L_n^{-1}(x)\big) + q_n (\alpha_n,u_n,v_n, \phi),
\end{equation}
where $ q_n(\alpha_n,u_n,v_n, \phi) = \dfrac{P_n(\alpha_n,u_n,v_n,\phi)}
{Q_n(u_n,v_n,\phi)}$, $\phi = \frac{L_n^{-1}(x) - x_1}{x_N -
x_1}=\frac{x - x_n}{h_n}, x \in [x_n, x_{n+1}]$, $\; n \in \mathbb{N}_{N-1},$
with $P_n$ and $Q_n$  as in (\ref{HALDIASANGEETAeq8}). Note that the subscript
$\alpha$ is used to emphasize the dependence of the map $T$ on the
scale vector $\alpha$. The coefficients of the rational function
$q_n$ depend on the scaling factor $\alpha_n$ and the shape
parameter $u_n,v_n$, and hence $q_n$ can be thought of as a function
of $\alpha_n$, $u_n,v_n$, and $\phi$. The interpolants $g$ and $C$ are
the fixed points of $T_\alpha$ with $\alpha \ne {\bf 0}$ and $
\alpha = {\bf 0} $ respectively.
\begin{equation*}
\begin{split}
|T_{\alpha} g(x) - T_{\alpha} C(x)| &=  \Big|\big \{ \alpha_n g(L_n^{-1}(x)) + q_n (\alpha_n, u_n,v_n,\phi )\big\} - \big\{ \alpha_n C(L_n^{-1}(x))\\&~~~~+ q_n (\alpha_n, u_n,v_n, \phi )\big\} \Big|, \\
&\le |\alpha|_{\infty} \|g - C \|_{\infty}.
\end{split}
\end{equation*}
From the above inequality we deduce that $T_\alpha$ is a contraction:
\begin{equation}\label{HALDIASANGEETAeq10d}
\|T_{\alpha}g- T_{\alpha} C \|_{\infty} \le  |\alpha|_{\infty}
\|g- C \|_{\infty}.
\end{equation}
Let $ x \in [x_n, x_{n+1}]$  and  $\alpha \ne {\bf 0}$. Using
(\ref{HALDIASANGEETAeq10c}) and the Mean Value Theorem:
\begin{equation*}
\begin{split}
 |T_{\alpha} C(x) - T_{\bf 0} C(x) |  & = \Big|\big \{ \alpha_n  C(L_n^{-1}(x)) + q_n (\alpha_n, u_n,v_n, \phi )\big\} -  q_n (0, u_n,v_n,\phi)\Big|, \\
& \le | \alpha_n| \|C \|_{\infty} + | \alpha_n|  \Big |
\frac{\partial q_n(\tau_n,u_n,v_n,\phi)}{\partial \alpha_n} \Big |,\\
&\le |\alpha_n| ( \|C \|_{\infty} + K_0).
\end{split}
\end{equation*}
Consequently,
\begin{equation}\label{HALDIASANGEETAeq10e}
 \|T_{\alpha} C- T_{\bf 0} C \|_{\infty} \le |\alpha|_{\infty}  ( \|C \|_{\infty} +
 K_0).
\end{equation}
Using  (\ref{HALDIASANGEETAeq10d}) and (\ref{HALDIASANGEETAeq10e}), we obtain
\begin{equation*}
\begin{split}
 \|g - C \|_{\infty} = \| T_{\alpha} g - T_{\bf 0} C \|_{\infty} &  \le  \| T_{\alpha}g - T_{\alpha} C \|_{\infty} + \| T_{\alpha} C - T_{\bf 0} C \|_{\infty}, \\
& \le |\alpha|_{\infty} \| g - C \|_{\infty} + |\alpha|_{\infty} (
\|C \|_{\infty} +  K_0).
\end{split}
\end{equation*}
From the above inequality we can easily get
\begin{equation}\label{HALDIASANGEETAeq10f}
 \|g - C \|_{\infty} \le  \frac { |\alpha|_{\infty} (  \|C \|_{\infty} +  K_0)}{ 1 -|\alpha|_{\infty}
 }.
\end{equation}
Now, we find an upper bound for
$ \|C \|_{\infty}$ and estimate $K_0$, if not optimally, at least
practically. From (\ref{HALDIASANGEETAeq10}), for $ x \in I$,
$$|C(x)| \le \frac {\max \{|P_n^{**}(\rho) | : n \in \mathbb{N}_{N-1} , 0 \le \rho
\le 1 \}}{\min \{| Q_n^*(\rho)| : n \in \mathbb{N}_{N-1}, 0 \le \rho \le 1\}},$$
where $P_n^{**} (\rho)$ is the numerator in (\ref{HALDIASANGEETAeq10}). Using
the extremum calculations of polynomials, {\small
\begin{equation*}
\begin{split}
&|P_n^{**}(\rho)| \le |u_n|| y_n|  (1-\rho)^3  + \{(3|u_n|+ |v_n|)| y_n| + |u_n|h_n |d_n|\} \rho (1- \rho)^2 \\& ~~~~~~~~~+ \{ (3|u_n|+ |v_n|) | y_{n+1}| + |u_n|h_n | d_{n+1}|) \rho^2 (1-\rho) +|u_n|| y_{n+1}| \rho^3,\\
&\implies \underset{ n \in \mathbb{N}_{N-1}, \rho \in [0,1]} \max |P_n^{**}(\rho)
| \le |u|_{\infty} |y|_{\infty} + \frac{1}{4} \{(3|u|_{\infty}+ |v|_{\infty}) |y|_{\infty} +
|u|_{\infty} h |d|_{\infty})\} ,
 \end{split}
\end{equation*} }
and $|Q_n^* (\rho)| = Q_n^* (\rho)  \ge s_n$. Therefore,\\
$$\|C
\|_{\infty} \le \dfrac { |u|_{\infty} |y|_{\infty} + \frac{1}{4} \{(3|u|_{\infty}+ |v|_{\infty}) |y|_{\infty} +
|u|_{\infty} h |d|_{\infty})\} } { \min\{ s_n : n \in \mathbb{N}_{N-1} \}
 }.$$
From  (\ref{HALDIASANGEETAeq8}), for $x \in [x_n, x_{n+1}]$,
$\dfrac{\partial q_n(\alpha_n,u_n,v_n, \rho)}{\partial \alpha_n} = \dfrac{
\tilde{P_n}(u_n,v_n,\rho)}
 {Q_n^*
 (u_n,v_n,\rho)},$ where
\begin{equation*}
\begin{split}
\tilde{P_n}(u_n,v_n,\rho) = & -\{u_n y_1 (1-\rho)^3 + \{(3u_n+ v_n) y_1+ u_n(x_N
- x_1) d_1\}  \rho (1- \rho)^2\\& + \{(3u_n+ v_n) y_N -  u_n  (x_N - x_1)d_N\}
\rho^2 (1-\rho) + u_n y_N \rho^3 \}.
\end{split}
\end{equation*}
Using similar extremum
calculations,
\begin{equation*}
\begin{split}
&\Big | \dfrac{\partial q_n(.,u_n,v_n,\rho)}{\partial \alpha_n} \Big |  \le  K_0, \text{where}
\\&K_0=\frac{[|u|_{\infty}+ \frac{1}{4}(3|u|_{\infty}+|v|_{\infty})]\max\{|y_1|, |y_N|\}+\frac{1}{4}|u|_{\infty}|I|\max\{ |d_1|,|d_N|\}}{\min\{ c_n : n \in \mathbb{N}_{N-1} \}},
\end{split}
\end{equation*}
and $ |I| = x_N - x_1$. Now (\ref{HALDIASANGEETAeq10f})  coupled with $\|C \|_{\infty}$ and  $K_0$ gives
\begin{equation}\label{HALDIASANGEETAeq10g}
\begin{split}
 \|g - C \|_{\infty} \le ~& \frac{|\alpha|_\infty}{s(1-|\alpha|_\infty)}\Big\{|u|_\infty M+
\frac{1}{4}\big[(3|u|_\infty+|v|_\infty)M+|u|_\infty\times\\~&(h
|d|_\infty+|I|\max\{|d_1|,|d_N|\})\big]\Big\}.
\end{split}
\end{equation}
The desired error estimate is obtained from Theorem 7.1 of \cite{SHH} and (\ref{HALDIASANGEETAeq10g}).
\end{proof}
\noindent \textbf{Convergence result:}
Due to the principle of construction of a smooth FIF, for $g \in
\mathcal{C}^1(I)$, we impose $ |\alpha_n| <  a_n =
\frac{ h_n}{x_N - x_1}$. Hence, $ |\alpha|_{\infty} <
\frac{ h} {x_N - x_1}$, and consequently $g$ converges
uniformly to the original function when the norm of the partition
tends to zero. If we take $ |\alpha_n| <  a_n^k$, then  $\|g - C \|_{\infty}=O(h^k)$ as $h\rightarrow 0$ for $k=2,3$.
\section{Parameter Identification for Constrained Interpolation}\label{HALDIASANGEETAsec4}
In this section, we take up the problem of identifying the parameters of the rational FIF so that the corresponding $\mathcal{C}^1$-RCSFIF enjoys certain desirable shape properties. We  identify suitable
values for the parameters of the rational IFS so that the
corresponding $\mathcal{C}^1$-RCSFIF preserves monotonicity and convexity in Section \ref{HALDIASANGEETAsubsec4d} and Section \ref{HALDIASANGEETAsubsec4e}, respectively.
\subsection{Monotonicity Preserving RCSFIF}\label{HALDIASANGEETAsubsec4d}
\noindent We consider a  data set
$\{(x_i, y_i, d_i):i\in \mathbb{N}_N\}$ such that $y_1\leq
y_2\leq\dots\leq y_N $ (i.e., $\Delta_n\geq 0~ \forall~ n \in \mathbb{N}_{N-1}$).  We derive sufficient conditions on the parameters of the rational IFS so that the corresponding RCSFIF developed in Section \ref{HALDIASANGEETAsec3} generate monotonic fractal curves for a given set of monotonic data.  For a monotonic increasing interpolant $g\in
\mathcal{C}^1(I)$, it is necessary to have $d_i \ge 0 , i\in \mathbb{N}_N$.
We know that a differentiable function $g$ is monotonic increasing on $I$ if and only if $g^{(1)}(x) \ge 0 $ for all $ x \in I$. Calculation of $g^{(1)}\big(L_n(x)\big)$ from (\ref{HALDIASANGEETAeq8}) and further simplifications give:
\begin{equation}\label{HALDIASANGEETAeq25}
g^{(1)}\big(L_n(x)\big) = \frac{\alpha_n}{a_n} g^{(1)}(x) + \frac{{\underset{j=0}{\overset{4} \sum}A_{jn}\theta^j(1-\theta)^{4-j}}}{[u_n +v_n \theta(1-\theta)]^2}, ~~x\in I,~n\in \mathbb{N}_{N-1},
\end{equation}
\begin{equation*}
\begin{split}
A_{0n}=~&u_n^2 [d_n -\frac{\alpha_n}{h_n} (x_N - x_1) d_1],\\
A_{1n}= ~&(6u_n^2+2u_nv_n)[\Delta_n-\frac{\alpha_n}{h_n} (y_N - y_1)]-2u_n^2 [d_{n+1} - \frac{\alpha_n}{h_n} (x_N - x_1) d_N],\\
A_{2n}=~& (12u_n^2+6u_nv_n+v_n^2)[\Delta_n-\frac{\alpha_n}{h_n} (y_N - y_1)]-(3 u_n^2+u_nv_n)[d_n -\frac{\alpha_n}{h_n}\times\\
~&(x_N - x_1) d_1]-(3 u_n^2+u_nv_n)[d_{n+1} - \frac{\alpha_n}{h_n} (x_N - x_1) d_N],\\
A_{3n}=~& (6u_n^2+2u_nv_n)[\Delta_n-\frac{\alpha_n}{h_n} (y_N - y_1)]-2u_n^2[d_n -\frac{\alpha_n}{h_n} (x_N - x_1) d_1],\\
A_{4n}=~& u_n^2[d_{n+1} - \frac{\alpha_n}{h_n} (x_N - x_1) d_N].
\end{split}
\end{equation*}
To maintain  positivity of $g^{(1)}$  in the successive iterations and to keep the desired data dependent monotonicity condition to be simple enough, we assume $\alpha_n \ge 0$ for all $n \in \mathbb{N}_{N-1}$. It follows that for $g^{(1)} \ge 0,$ it is enough
to prove $g^{(1)}\big(L_n(x)\big) \ge 0$ for all $n \in \mathbb{N}_{N-1}$
and $ x \in I$, whenever $g^{(1)}(x) \ge 0$. Then, for $n \in \mathbb{N}_{N-1}$ and an arbitrary knot point $x_j$, sufficient
conditions for $g^{(1)}\big(L_n(x_j)\big)\ge 0$ are
\begin{equation}\label{HALDIASANGEETAeq26}
A_{0n} \ge 0 , A_{1n} \ge 0, A_{2n} \ge 0 , A_{3n} \ge 0, A_{4n} \ge 0,
\end{equation}
where the necessary condition on the derivative parameters are assumed.\\
It is plain to see that the additional conditions on the scaling factors $\alpha_n$ and shape parameters $u_n>0$ and $v_n>0$ prescribed in the following theorem ensure the positivity of  $A_{0n}$, $A_{1n}$, $A_{2n}$, $A_{3n}$ and  $A_{4n}$.
\begin{theorem}\label{RCFIFthm4}
Let $g$  be the RCSFIF defined as in (\ref{HALDIASANGEETAeq8}) associated with a given set of monotonic data $\{(x_i,y_i,d_i):i\in \mathbb{N}_N\}$,  and  let $d_i$, $i \in \mathbb{N}_N$, be chosen so as to satisfy the necessary monotonicity condition. Then the following conditions on the scaling factors and the shape parameters $u_n >0, v_n >0$  on each subinterval $I_n$ are sufficient for $g$ to be monotone on $I$:
\begin{equation}\label{HALDIASANGEETAeq33}
 0 \le \alpha_n \le \min \Big \{a_n, \frac {h_nd_n} { d_1 (x_N - x_1)},    \frac {h_nd_{n+1}} { d_N (x_N - x_1)},   \frac{h_n \Delta_n}{y_N - y_1}\Big
 \},
\end{equation}
\begin{equation}\label{HALDIASANGEETAeq34}
\begin{split}
v_n &\ge \max \Big\{\dfrac{u_n[d_n - \frac{\alpha_n}{h_n} (x_N - x_1) d_1]}{\Delta_n-\frac{\alpha_n}{h_n} (y_N - y_1)},\dfrac{u_n[d_{n+1} - \frac{\alpha_n}{h_n} (x_N - x_1) d_N]}{\Delta_n-\frac{\alpha_n}{h_n} (y_N - y_1)},\\&~~~~~~~~~~~~~~~~~\dfrac{u_n[d_n+d_{n+1} - \frac{\alpha_n}{h_n} (x_N - x_1) (d_1+d_N)]}{\Delta_n-\frac{\alpha_n}{h_n} (y_N - y_1)} \Big \}, n \in \mathbb{N}_{N-1}.
\end{split}
\end{equation}
\end{theorem}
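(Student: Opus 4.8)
The discussion preceding the statement already reduces the claim to the coefficient inequalities (\ref{HALDIASANGEETAeq26}). Indeed, with $\alpha_n\ge 0$ the self-referential term $\frac{\alpha_n}{a_n}g^{(1)}(x)$ in (\ref{HALDIASANGEETAeq25}) is nonnegative whenever $g^{(1)}(x)\ge 0$; the denominator $[u_n+v_n\theta(1-\theta)]^2$ is strictly positive for $u_n,v_n>0$; and the monomials $\theta^j(1-\theta)^{4-j}$ are nonnegative on $[0,1]$. Hence $A_{jn}\ge 0$ for all $j$ makes the Read-Bajraktarevi\'{c} operator $T^*$ associated with (\ref{HALDIASANGEETAeq25}) map the (closed) cone of nonnegative functions in $\mathcal{G}^*$ into itself, so that its fixed point $g^{(1)}$ is nonnegative on all of $I$, being the uniform limit of iterates started from a nonnegative function; this yields $g$ monotone on $I$. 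The whole proof therefore amounts to verifying that the bounds (\ref{HALDIASANGEETAeq33}) and (\ref{HALDIASANGEETAeq34}) force $A_{jn}\ge 0$ for $j=0,1,2,3,4$.

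To streamline the verification I would abbreviate the three bracketed quantities recurring in the $A_{jn}$ as $d_n^*:=d_n-\frac{\alpha_n}{h_n}(x_N-x_1)d_1$, $d_{n+1}^*:=d_{n+1}-\frac{\alpha_n}{h_n}(x_N-x_1)d_N$ and $\Delta_n^*:=\Delta_n-\frac{\alpha_n}{h_n}(y_N-y_1)$, as in Remark~\ref{HALDIASANGEETArem1}. The first step handles the two endpoint coefficients $A_{0n}=u_n^2 d_n^*$ and $A_{4n}=u_n^2 d_{n+1}^*$: their nonnegativity is equivalent to $d_n^*\ge 0$ and $d_{n+1}^*\ge 0$, which are precisely what the upper bounds $\alpha_n\le \frac{h_n d_n}{d_1(x_N-x_1)}$ and $\alpha_n\le \frac{h_n d_{n+1}}{d_N(x_N-x_1)}$ in (\ref{HALDIASANGEETAeq33}) provide. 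The remaining bound $\alpha_n\le \frac{h_n\Delta_n}{y_N-y_1}$ secures $\Delta_n^*\ge 0$, which every later estimate uses, while $\alpha_n\le a_n$ is the usual constraint guaranteeing $g\in\mathcal{C}^1(I)$.

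For the middle coefficients I would exploit the factorizations $A_{1n}=2u_n[(3u_n+v_n)\Delta_n^*-u_n d_{n+1}^*]$ and $A_{3n}=2u_n[(3u_n+v_n)\Delta_n^*-u_n d_n^*]$, which are affine in $v_n$; since $\Delta_n^*\ge 0$, the lower bounds $v_n\ge u_n d_{n+1}^*/\Delta_n^*$ and $v_n\ge u_n d_n^*/\Delta_n^*$ of (\ref{HALDIASANGEETAeq34}) give $v_n\Delta_n^*\ge u_n d_{n+1}^*$ and $v_n\Delta_n^*\ge u_n d_n^*$, so both coefficients are nonnegative once the extra nonnegative term $3u_n\Delta_n^*$ is discarded. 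The genuinely delicate coefficient is the quadratic-in-$v_n$ expression $A_{2n}=(12u_n^2+6u_nv_n+v_n^2)\Delta_n^*-u_n(3u_n+v_n)(d_n^*+d_{n+1}^*)$. Here I would use the third lower bound $v_n\ge u_n(d_n^*+d_{n+1}^*)/\Delta_n^*$, i.e. $u_n(d_n^*+d_{n+1}^*)\le v_n\Delta_n^*$, to estimate the subtracted term by $u_n(3u_n+v_n)(d_n^*+d_{n+1}^*)\le(3u_n+v_n)v_n\Delta_n^*=(3u_nv_n+v_n^2)\Delta_n^*$, whence $A_{2n}\ge(12u_n^2+3u_nv_n)\Delta_n^*=3u_n(4u_n+v_n)\Delta_n^*\ge 0$.

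I expect this last step to be the main obstacle: the $v_n^2\Delta_n^*$ contributions must cancel exactly and the surviving coefficient $3u_n(4u_n+v_n)$ must remain nonnegative, which is exactly where the positivity $u_n,v_n>0$ enters decisively. A secondary point to record is the degenerate case $\Delta_n^*=0$, which under (\ref{HALDIASANGEETAeq33}) occurs only when $\Delta_n=0$ and $\alpha_n=0$; there the necessary monotonicity condition forces $d_n=d_{n+1}=0$, so every $A_{jn}$ vanishes and the divisions in (\ref{HALDIASANGEETAeq34}) need not be invoked. Otherwise $\Delta_n^*>0$ and those quotients are legitimate, completing the reduction to (\ref{HALDIASANGEETAeq26}).
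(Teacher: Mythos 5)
Your proposal is correct and follows essentially the same route as the paper: the paper reduces monotonicity of $g$ to the coefficient conditions $A_{jn}\ge 0$ in (\ref{HALDIASANGEETAeq26}) via the same nonnegativity-propagation argument for the fixed point of the Read--Bajraktarevi\'{c} operator, and then merely declares the verification under (\ref{HALDIASANGEETAeq33})--(\ref{HALDIASANGEETAeq34}) ``plain to see,'' so your factorizations of $A_{1n}$, $A_{3n}$ and the exact cancellation of the $v_n^2\Delta_n^*$ terms in $A_{2n}$ supply precisely the omitted computation, and they check out. One negligible slip: under (\ref{HALDIASANGEETAeq33}) the degenerate case $\Delta_n^*=0$ can also occur at the boundary choice $\alpha_n=\frac{h_n\Delta_n}{y_N-y_1}$ with $\Delta_n>0$, not only when $\Delta_n=\alpha_n=0$, but this boundary case is equally glossed over by the paper (cf.\ Remark \ref{HALDIASANGEETArem6}).
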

\begin{remark}\label{HALDIASANGEETArem6}
If $\Delta_n = 0$, then we take $\alpha_n =0$ for the monotonicity
 of the FIF $g$. Also in this case, $d_n= d_{n+1}=0$. Consequently, $g\big(L_n(x)\big) = y_n
 = y_{n+1}$ ,i.e., to say that  $g$ reduces to a constant on the
 interval $I_n=[x_n, x_{n+1}]$.
\end{remark}
\begin{remark}\label{HALDIASANGEETArem7}
When all $\alpha_n = 0$, the RCSFIF $g$ reduces to the
classical rational cubic spline $C$. In this case,  condition
(\ref{HALDIASANGEETAeq33}) is obviously true, and the condition (\ref{HALDIASANGEETAeq34})
reduces to
\begin{equation}\label{HALDIASANGEETAeq34}
\begin{split}
v_n &\ge \max \Big\{\dfrac{u_nd_n}{\Delta_n},\dfrac{u_nd_{n+1} }{\Delta_n},\dfrac{u_n(d_n+d_{n+1})}{\Delta_n} \Big \}, n \in \mathbb{N}_{N-1}.
\end{split}
\end{equation}
Thus (\ref{HALDIASANGEETAeq34}) provides sufficient condition for the monotonicity
of  $C$ (\cite{SHH}, p. 78).
\end{remark}
\subsection{Convexity Preserving RCSFIF}\label{HALDIASANGEETAsubsec4e}
Let $\{(x_i,y_i,d_i):i\in \mathbb{N}_N\}$ be the convex data defined over the interval $I$ such that
\begin{equation}\label{HALDIASANGEETAeq35}
 d_1 < \Delta_1 < d_2 < \Delta_2 < \dots < d_i < \Delta_i < \dots <
 d_N.
\end{equation}
We restrict the scaling factors to be nonnegative due to the computational complexity involved. By the principle of
construction of twice differentiable FIFs, we take
$|\alpha_n|<a_n^2$ for all $n\in \mathbb{N}_{N-1}$. Informally,
\begin{equation}\label{HALDIASANGEETAeq36}
g^{(2)}\big(L_n(x)\big) = \frac{\alpha_n}{a_n^2} g^{(2)}(x) + R_n(x),  ~~x\in I,
\end{equation}
where
\begin{equation*}
\begin{split}
R_n(x)=~&\frac{{\underset{j=0}{\overset{5} \sum}B_{jn}\theta^j(1-\theta)^{5-j}}}{h_n[u_n +v_n \theta(1-\theta)]^3},\\
B_{0n} =~&2u_n^2 \big[(3u_n+v_n)\{\Delta_n-\frac{\alpha_n}{h_n} (y_N - y_1)\}-u_n
\{d_{n+1} - \frac{\alpha_n}{h_n} (x_N - x_1) d_N\}\\~&-(2u_n+v_n)\{d_n -\frac{\alpha_n}{h_n} (x_N - x_1) d_1\}\big],\\
B_{1n}=~& 2u_n^2\big[7u_n\big\{\Delta_n-d_n-\frac{\alpha_n}{h_n}\{ (y_N - y_1)-(x_N - x_1) d_1\}\big\}+2v_n\big\{\Delta_n-d_n\\~&-\frac{\alpha_n}{h_n}\{ (y_N - y_1) - (x_N - x_1) d_1\}\big\}+2u_n\big\{\Delta_n-d_{n+1}+\frac{\alpha_n}{h_n}\{(x_N - x_1) d_N \\~&-(y_N - y_1)\}\big\}\big],\\
B_{2n}=~& 2u_n\big[(6u_n^2+u_nv_n)\big\{\Delta_n-\frac{\alpha_n}{h_n} (y_N - y_1) \big\}-(8u_n^2+u_nv_n)\{d_n -\frac{\alpha_n}{h_n}\times\\~& (x_N - x_1) d_1\}+ 2u_n^2 \{d_{n+1} - \frac{\alpha_n}{h_n} (x_N - x_1) d_N\}\big],\\
B_{3n}=~& 2u_n\big[(-6u_n^2-u_nv_n)\big\{\Delta_n-\frac{\alpha_n}{h_n} (y_N - y_1) \big\}+(8u_n^2+u_nv_n)\{d_{n+1}\\~&-\frac{\alpha_n}{h_n} (x_N - x_1) d_N\}- 2u_n^2 \{d_n - \frac{\alpha_n}{h_n} (x_N - x_1) d_1\}\big],\\
B_{4n}=~& 2u_n^2\big[7u_n\big\{d_{n+1}-\Delta_n-\frac{\alpha_n}{h_n}\{(x_N - x_1) d_N -(y_N - y_1) \}\big\}+2v_n\big\{d_{n+1}-\\~&\Delta_n-\frac{\alpha_n}{h_n}\{(x_N - x_1) d_N -(y_N - y_1) \}\big\}-2u_n\big\{\Delta_n-d_n-\frac{\alpha_n}{h_n}\{ (y_N - y_1)\\~& - (x_N - x_1) d_1\}\big\}\big],\\
B_{5n}=~&2u_n^2 \big[-(3u_n+v_n)\{\Delta_n-\frac{\alpha_n}{h_n} (y_N - y_1)\}+u_n
\{d_n - \frac{\alpha_n}{h_n} (x_N - x_1) d_1\}\\~&+(2u_n+v_n)\{d_{n+1} -\frac{\alpha_n}{h_n} (x_N - x_1) d_N\}\big].
\end{split}
\end{equation*}
\normalsize Recall that for $n\in \mathbb{N}_{N-1}$, the maps $L_n:[x_1,x_N]
\rightarrow [x_n,x_{n+1}]$ satisfy $L_n(x_1)=x_n$ and
$L_n(x_N)=x_{n+1}$. Therefore, we obtain
\begin{eqnarray}\label{HALDIASANGEETAeq37}
g^{(2)}(x_1^+)&=& [1-\frac{\alpha_1}{a_1^2}]^{-1}\frac{B_{01}}{h_1u_1^3},~~ g^{(2)}(x_N^-)= [1-\frac{\alpha_{N-1}}{a_{N-1}^2}]^{-1}\frac{B_{5,
N-1}}{h_{N-1}u_{N-1}^3
}, \nonumber\\
g^{(2)} (x_j^+)
&=& \frac{\alpha_j}{a_j^2}g^{(2)}(x_1^+)+\frac{B_{0j}}{h_ju_j^3},\;\; j=2,3,\dots, N-1.
\end{eqnarray}
For $0 \le \alpha_n <a_n^2$, it follows from (\ref{HALDIASANGEETAeq37})
that if $B_{0n} \ge 0$~ ($n\in \mathbb{N}_{N-1}$) and $B_{5, N-1} \ge 0$, then
the second derivatives (right-handed) at the knots $x_n$, $ n\in \mathbb{N}_{N-1}$, and the second derivative (left-handed) at $x_N$ are
nonnegative. For a knot point $x_j$, $j \in \mathbb{N}_{N-1},$ we have
\begin{eqnarray*}\label{HALDIASANGEETAeq38}
g^{(2)}\big(L_n(x_j)^+\big)= \frac{\alpha_n}{a_n^2} g^{(2)}(x_j^+)
+ R_n(x_j).
\end{eqnarray*}
Whence, with the assumption $B_{0n} \ge 0$ for all $n\in \mathbb{N}_{N-1}$, we
have $g^{(2)}\big(L_n(x_j)^+\big) \ge 0$, provided $R_n(x_j)\ge
0$. Note that $R_n(x_j)\ge 0$ is satisfied if the coefficients
$B_{mn} \ge 0$ for $m=0,1,\dots, 5$.
\begin{theorem}\label{RCFIFthm5}
Suppose $\{(x_i,y_i,d_i):i \in \mathbb{N}_N\}$ is a set of strictly convex
data, and $g$ is the corresponding rational cubic spline FIF
described in (\ref{HALDIASANGEETAeq8}). Assume that the derivative
parameters at the knots satisfy
$d_1<\Delta_1<\dots<d_n<\Delta_n<d_{n+1}<\dots<\Delta_{N-1}<d_N$.
Then, the following conditions on the scaling factors and  the
shape parameters  are sufficient for the convexity of $g$ on  $ I=[x_1, x_N].$
\begin{equation*}
\begin{split}
0\leq\alpha_n &<\min\left\{a_n^2,\frac{h_n(\Delta_n-d_n)}{y_N-y_1-d_1(x_N-x_1)},
\frac{h_n(d_{n+1}-\Delta_n)}{d_N(x_N-x_1)-(y_N-y_1)}\right\},\\
v_n \geq &\max \Big\{
u_n\dfrac{d_{n+1}-\frac{\alpha_n}{h_n}d_N(x_N-x_1)-\big[\Delta_n-\frac{\alpha_n}{h_n}
(y_N-y_1)\big]}{\Delta_n-
\frac{\alpha_n}{h_n}(y_N-y_1)-\big[d_n-\frac{\alpha_n}{h_n}d_1(x_N-x_1)\big]},\\
&~~~~~~~~~~~~~u_n\dfrac{\Delta_n-\frac{\alpha_n}{h_n}(y_N-y_1)-
\big[d_n-\frac{\alpha_n}{h_n}
d_1(x_N-x_1)\big]}{d_{n+1}-\frac{\alpha_n}{h_n}
d_N(x_N-x_1)-\big[\Delta_n-\frac{\alpha_n}{h_n}(y_N-y_1)\big]}\Big\}~ \forall~n \in \mathbb{N}_{N-1}.
\end{split}
\end{equation*}
\end{theorem}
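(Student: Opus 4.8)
The plan is to work with the self-referential relation (\ref{HALDIASANGEETAeq36}) for the second derivative together with a cone-invariance argument, in direct analogy with the monotonicity proof of Theorem \ref{RCFIFthm4}. Since $0\le \alpha_n < a_n^2$, the construction produces a FIF whose second derivative $g^{(2)}$ is itself the fixed point of the Read--Bajraktarević operator $h \mapsto \tfrac{\alpha_n}{a_n^2}\,h\circ L_n^{-1} + R_n$ read off from (\ref{HALDIASANGEETAeq36}). Because $\tfrac{\alpha_n}{a_n^2}\ge 0$, this operator leaves the closed cone of nonnegative functions invariant as soon as every $R_n\ge 0$; its unique fixed point $g^{(2)}$ then lies in that cone, and $g^{(2)}\ge 0$ on $I$ is exactly convexity of $g$. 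Hence it suffices to prove $R_n(x)\ge 0$ for all $x$ and all $n\in\mathbb{N}_{N-1}$.

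I would then reduce $R_n\ge 0$ to the positivity of its Bernstein-type coefficients. In $R_n$ the denominator $h_n[u_n+v_n\theta(1-\theta)]^3$ is strictly positive for $u_n,v_n>0$ and $\theta\in[0,1]$, while each basis factor $\theta^j(1-\theta)^{5-j}$ is nonnegative on $[0,1]$; therefore $R_n\ge 0$ follows at once once $B_{jn}\ge 0$ for $j=0,1,\dots,5$. The whole theorem thus collapses to these six inequalities, which is also the route suggested by the knot-wise discussion preceding (\ref{HALDIASANGEETAeq37}).

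The decisive step is an algebraic normalization. I introduce the modified quantities
\[ \tilde{\Delta}_n=\Delta_n-\tfrac{\alpha_n}{h_n}(y_N-y_1),\quad \tilde{d}_n=d_n-\tfrac{\alpha_n}{h_n}(x_N-x_1)d_1,\quad \tilde{d}_{n+1}=d_{n+1}-\tfrac{\alpha_n}{h_n}(x_N-x_1)d_N, \]
and first check that the three upper bounds on $\alpha_n$ force $\tilde{d}_n<\tilde{\Delta}_n<\tilde{d}_{n+1}$; here strict convexity is used to see that the denominators $y_N-y_1-d_1(x_N-x_1)$ and $d_N(x_N-x_1)-(y_N-y_1)$ are positive, since the global slope $\tfrac{y_N-y_1}{x_N-x_1}$ is a convex average of the $\Delta_i$ and hence lies strictly between the extreme derivatives $d_1$ and $d_N$. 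Setting $P=\tilde{\Delta}_n-\tilde{d}_n>0$ and $Q=\tilde{d}_{n+1}-\tilde{\Delta}_n>0$ and substituting $\tilde{\Delta}_n=\tilde{d}_n+P$, $\tilde{d}_{n+1}=\tilde{\Delta}_n+Q$, the $\tilde{d}_n$-terms cancel and the six coefficients collapse to
\[ B_{0n}=2u_n^2[(2u_n+v_n)P-u_nQ], \qquad B_{5n}=2u_n^2[(2u_n+v_n)Q-u_nP], \]
\[ B_{1n}=2u_n^2[(7u_n+2v_n)P-2u_nQ], \qquad B_{4n}=2u_n^2[(7u_n+2v_n)Q-2u_nP], \]
\[ B_{2n}=2u_n^2[(8u_n+v_n)P+2u_nQ], \qquad B_{3n}=2u_n^2[(8u_n+v_n)Q+2u_nP]. \]
From this form the conclusion is transparent: $B_{2n},B_{3n}\ge 0$ with no restriction, $B_{0n},B_{1n}\ge 0$ once $v_n\ge u_nQ/P$, and $B_{4n},B_{5n}\ge 0$ once $v_n\ge u_nP/Q$. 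The common requirement $v_n\ge\max\{u_nQ/P,\,u_nP/Q\}$ is precisely the lower bound in the statement rewritten in terms of $d_n,d_{n+1},\Delta_n$ (and is in fact slightly stronger than the bare inequalities $B_{jn}\ge0$ require).

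I expect the main obstacle to be purely computational: carrying out the expansion of $B_{0n},\dots,B_{5n}$ and confirming that after the substitution above every $\tilde{d}_n$-contribution cancels, leaving only the clean $P,Q$ combinations displayed. Once this normalization is secured the positivity is immediate, and the cone-invariance argument of the first paragraph closes the proof with no delicate estimate; the degenerate handling of vanishing $P$ or $Q$ is excluded by the strict convexity hypothesis, exactly as the strict inequality $d_1<\Delta_1<\dots<d_N$ guarantees.
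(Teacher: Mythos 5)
Your proposal is correct and follows essentially the same route as the paper: both reduce convexity to the nonnegativity of the Bernstein-type coefficients $B_{0n},\dots,B_{5n}$ in the self-referential relation (\ref{HALDIASANGEETAeq36}) (so that $R_n\ge 0$ and the iteration/fixed-point argument with $0\le\alpha_n<a_n^2$ propagates $g^{(2)}\ge 0$), with your stated $\alpha_n$-bounds guaranteeing $P=\tilde{\Delta}_n-\tilde{d}_n>0$, $Q=\tilde{d}_{n+1}-\tilde{\Delta}_n>0$ and the $v_n$-bound matching the theorem's condition $v_n\ge\max\{u_nQ/P,\,u_nP/Q\}$ exactly. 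Your explicit $P,Q$ normalization, under which the $\tilde{d}_n$-terms cancel and the six coefficients collapse to the displayed combinations (which I have verified), is a clean way of organizing the same computation rather than a different method.
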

\begin{remark}\label{HALDIASANGEETArem8}
If the given set of data is not strictly convex but $\Delta_n- d_n
= 0$ or $d_{n+1}-\Delta_n =0$, then we take $\alpha_n = 0$. Now
for $g^{(2)}(x) \geq 0$ (see the expressions for the coefficients
$B_{mn}$), we take $d_n = d_{n+1}= \Delta_n$. Thus, we get
$g\big(L_n(x)\big)= \frac{(x_N-x)y_n + (x-x_1) y_{n+1}}{x_N-x_1}$,
i.e., the interpolant reduces to a straight line segment on the
interval $[x_n, x_{n+1}]$.
\end{remark}
\begin{remark}\label{HALDIASANGEETArem9}
When $\alpha_n=0$ for all $n \in \mathbb{N}_{N-1}$, Theorem \ref{RCFIFthm5}
recaptures the sufficient conditions for  the convexity of the
classical rational cubic spline $C$  described in
\cite{SHH}.
\end{remark}
\section{Numerical Examples}\label{HALDIASANGEETAsec6}
%Consider the positive data set $\{(x_i,y_i)\}_{i=1}^{4}$=$\{(0,0.1), (0.4,1), (0.75,2), (1,5\}$. The derivative values at the knot points are estimated by using the arithmetic mean method (see Section \ref{shapeapprox}) as $d_1=-1.5238,d_2=1.5238,  d_3=8.1905,  d_4=15.8095$. Our scheme has been implemented according to the choice of parameters given in Table \ref{chapter3table1}. In Fig. \ref{chapter3fig1}(a), we do not follow the prescription given in Theorem \ref{RCFIFthm1} for which we obtain a nonpositive RCSFIF.  Fig. \ref{chapter3fig1}(b) represents a positive RCSFIF obtained through a choice of parameters as described in Theorem \ref{RCFIFthm1}. It is noticed that the perturbation in  $\alpha_1$ affects the RCSFIF considerably in the first subinterval, namely $[x_1,x_2]$. By setting all the scaling factors to be zero, the classical rational cubic spline is obtained (see Fig. \ref{chapter3fig1}(c)). We calculate the bounds of scaling factors and shape parameters using Theorem \ref{RCFIFthm2} so that 
RCSFIF lying within the rectangle $[0,1]\times[0.1,5]$. Our choices of the scaling factors and shape parameter values are displayed in Table \ref{chapter3table1}, and the corresponding range restricted RCSFIFs are generated in Figs. \ref{chapter3fig1}(d)-(e).\\
%\noindent Consider the Hermite data set $\{(1,-1.2,0.85), (3.3,-1.1,-0.15), (4.6,-1,\\-0.4583), (7.2,4.5,-0.7861\}$, which lies above the line $y=-0.5x-1$. By choosing the scaling factors $|\alpha_n|<a_n$ and the shape parameters  $u_n>0$, $v_n>0$, we obtain the RCSFIF in Fig. \ref{chapter3fig1}(f) which does not follows the requirement to be above the line  $y=-0.5x-1$. This illustrates the importance of Theorem \ref{RCFIFthm3}. Choosing the scaling factors and the shape parameters according to the prescription given in Theorem \ref{RCFIFthm3} (see Table \ref{chapter3table1}), RCSFIFs that lie above the line $y=-0.5x-1$ are shown in Figs. \ref{chapter3fig1}(g)-(i). We can notice the effects in the shape of the RCSFIF in the second subinterval due to changes in the scaling factor by $\alpha_2=0.1$ by comparing the RCSFIFs in Figs. \ref{chapter3fig1}(g)-(i). A classical rational cubic spline above the line $y=-0.5x-1$ is obtained by setting all the scaling factors to be zero as the prescription given in 
Remark \ref{HALDIASANGEETArem5}.
To illustrate the monotonicity preserving RCSFIF scheme appeared in Section \ref{HALDIASANGEETAsubsec4d}, we take a monotonic data set $\{(x_i,y_i)\}_{i=1}^{4}$=$\{(0,124), (0.5,331), (2.2,379), (3.3,835\}$, reported in \cite{SHH}. The derivative values $d_1=501.6738,d_2=326.3262,  d_3=262.7807,\\  d_4=566.3102$ are estimated using the amm. For monotonic FIFs, the computed bounds on the scaling factors are $0\le \alpha_1 < 0.0873$, $0\le \alpha_2 < 0.067$, $0\le \alpha_1 < 0.1746$ as prescribed in Theorem \ref{RCFIFthm4}. We take monotonic RCSFIF $g_{11}$ in Fig. \ref{chapter3fig2}(a) as our reference curve generated by iterating the IFS code with parameters displayed in Table \ref{chapter3table2}. We compare the effect of changing the value of parameters in a specified interval. Changing $\alpha_1$ to $0.01$ (see Table \ref{chapter3table2}),
we obtain RCSFIF $g_{12}$ in Fig. \ref{chapter3fig2}(b). It is clearly visible that the perturbation in $\alpha_1$ effects the RCSFIFs considerably
in the interval $[x_1, x_2]$, whereas there is  no noticeable
change in other subintervals. It can be observed that
changes in $\alpha_2$, $\alpha_3$ and $v_1$ produce  local effects when we compare RCSFIF $g_{13}$, $g_{14}$ and $g_{15}$ appeared in Fig. \ref{chapter3fig2}(c), Fig. \ref{chapter3fig2}(d) and Fig. \ref{chapter3fig2}(e), respectively with RCSFIF $g_{11}$ in Fig. \ref{chapter3fig2}(a). By taking
$\alpha_n=0$ for all $n \in \mathbb{N}_{N-1}$, we recover a standard monotonic
rational cubic spline plotted in Fig. \ref{chapter3fig2}(f). The derivative functions  $g^{(1)}_{1n}$, $n=1,2,\dots, 6$ are generated in Figs. \ref{chapter3fig3}(a)-(f). These curves have points of nondifferentiability on finite or dense subset of the interpolation interval $[0,3.3]$. The derivative $g^{(1)}_{16}$ of the classical rational cubic spline is smooth.
%%%%Table for monotonicity
\begin{center}
\begin{table}[h!]
\caption{Parameters corresponding to RCSFIFs in Fig. \ref{chapter3fig2}}\label{chapter3table2}
\begin{center}
\scriptsize {
\begin{tabular}{|l |l| l| l|}\hline
$\hspace{0.2cm}Figure \hspace{0.6cm}$&$\hspace{0.2cm}Scaling factors  \hspace{0.6cm}$&$\hspace{0.2cm}Shape parameters\hspace{0.6cm}$\\ \hline
 ~~~\ref{chapter3fig2}(a)&$\alpha=(0.08,0.06,0.15)$&$u=(0.1,0.1,0.1)$,$v=(0.09,15,0.15)$\\\hline
 ~~~\ref{chapter3fig2}(b)&$\alpha=(0.01,0.06,0.15)$&$u=(0.1,0.1,0.1)$,$v=(0.09,15,0.15)$\\ \hline
 ~~~\ref{chapter3fig2}(c)&$\alpha=(0.08,0.01,0.15)$&$u=(0.1,0.1,0.1)$,$v=(0.09,15,0.15)$ \\ \hline
 ~~~\ref{chapter3fig2}(d)&$\alpha=(0.08,0.06,0.01)$&$u=(0.1,0.1,0.1)$,$v=(0.09,15,0.15)$ \\ \hline
 ~~~\ref{chapter3fig2}(e)&$\alpha=(0.08,0.06,0.15)$&$u=(0.1,0.1,0.1)$,$v=(10,15,0.15)$\\\hline
 ~~~\ref{chapter3fig2}(f)&$\alpha=(0, 0,  0)$&$u=(0.1,0.1,0.1)$,$v=(0.09,15,0.15)$\\ \hline
\end{tabular}}
\end{center}
\end{table}
\end{center}
%%%%begining of figures of monotonicity
\begin{figure}[h!]
\begin{center}
\begin{minipage}{0.3\textwidth}
\epsfig{file=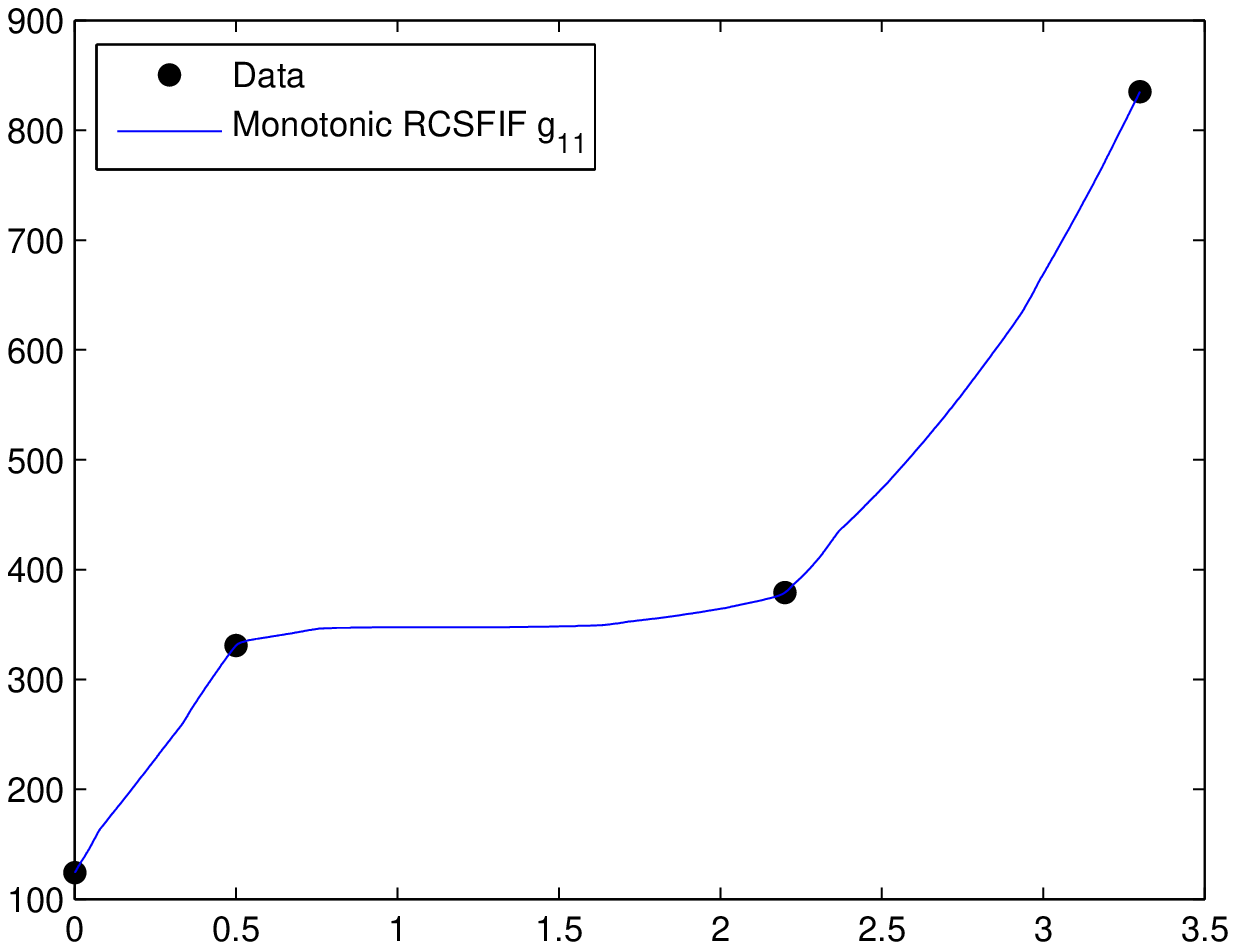,scale=0.25} \centering{\scriptsize{(a)
Monotonic RCSFIF $g_{11}$}}
\end{minipage}\hfill
\begin{minipage}{0.3\textwidth}
\epsfig{file=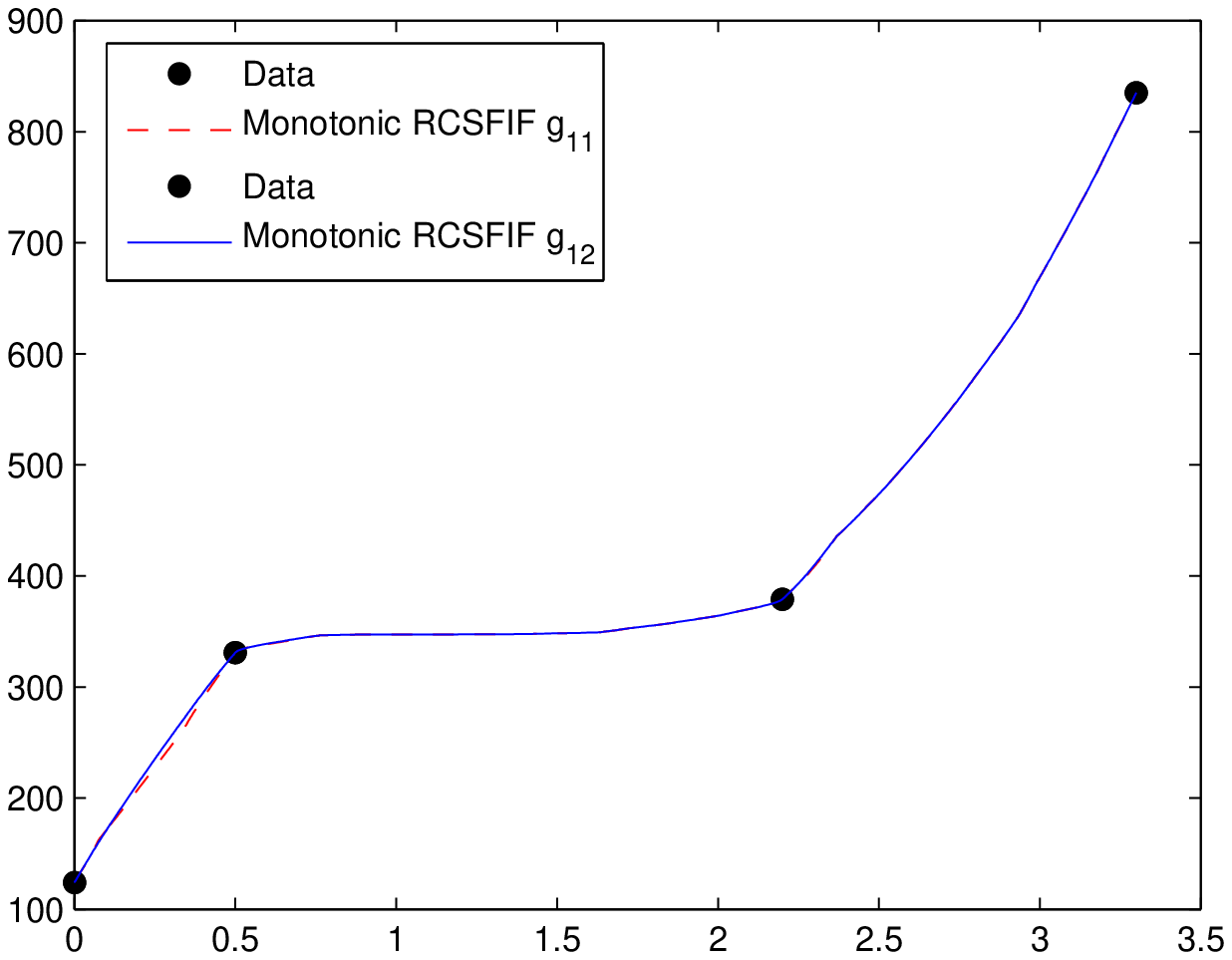,scale=0.25} \centering{\scriptsize{(b)
Monotonic RCSFIF $g_{12}$ (effect of perturbation in $\alpha_1$)}}
\end{minipage}\hfill
\begin{minipage}{0.3\textwidth}
\epsfig{file = 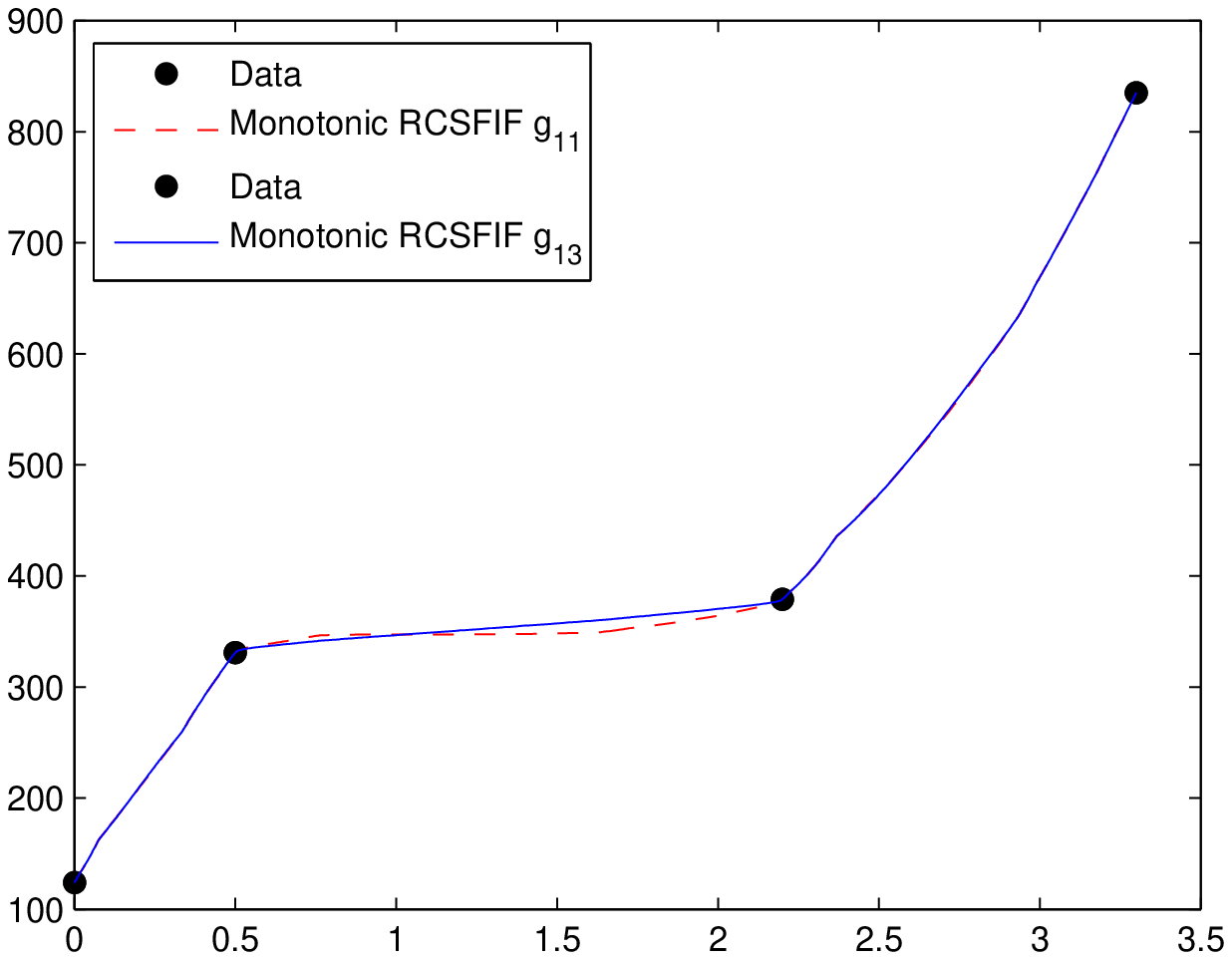,scale=0.25}\\ \centering{\scriptsize{(c)
Monotonic RCSFIF $g_{13}$ (effect of perturbation in $\alpha_2$)}}
\end{minipage}\hfill
\begin{minipage}{0.3\textwidth}
\epsfig{file=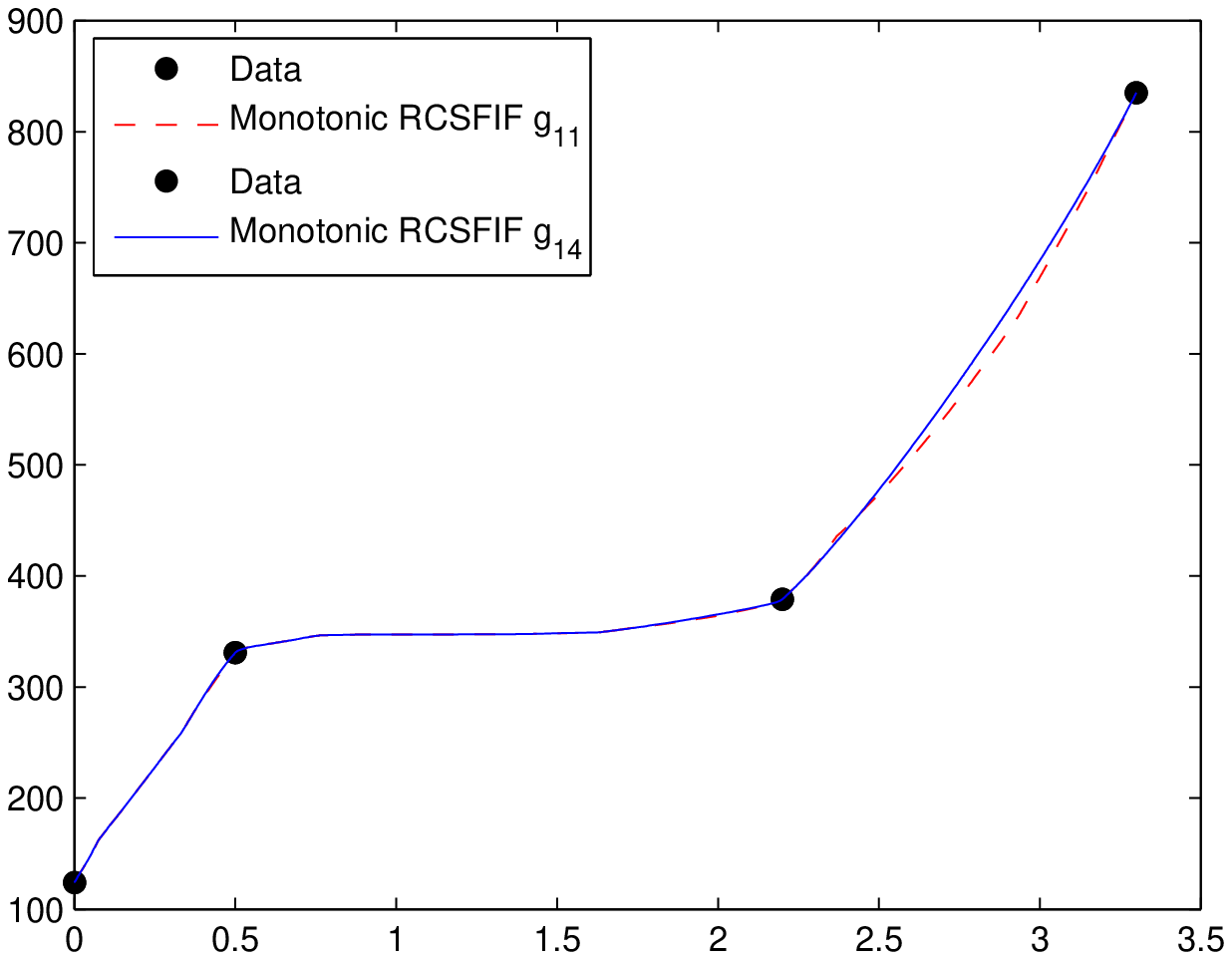,scale=0.25} \centering{\scriptsize{(d)
Monotonic RCSFIF $g_{14}$ (effect of perturbation in $\alpha_3$)}}
\end{minipage}\hfill
\begin{minipage}{0.3\textwidth}
\epsfig{file=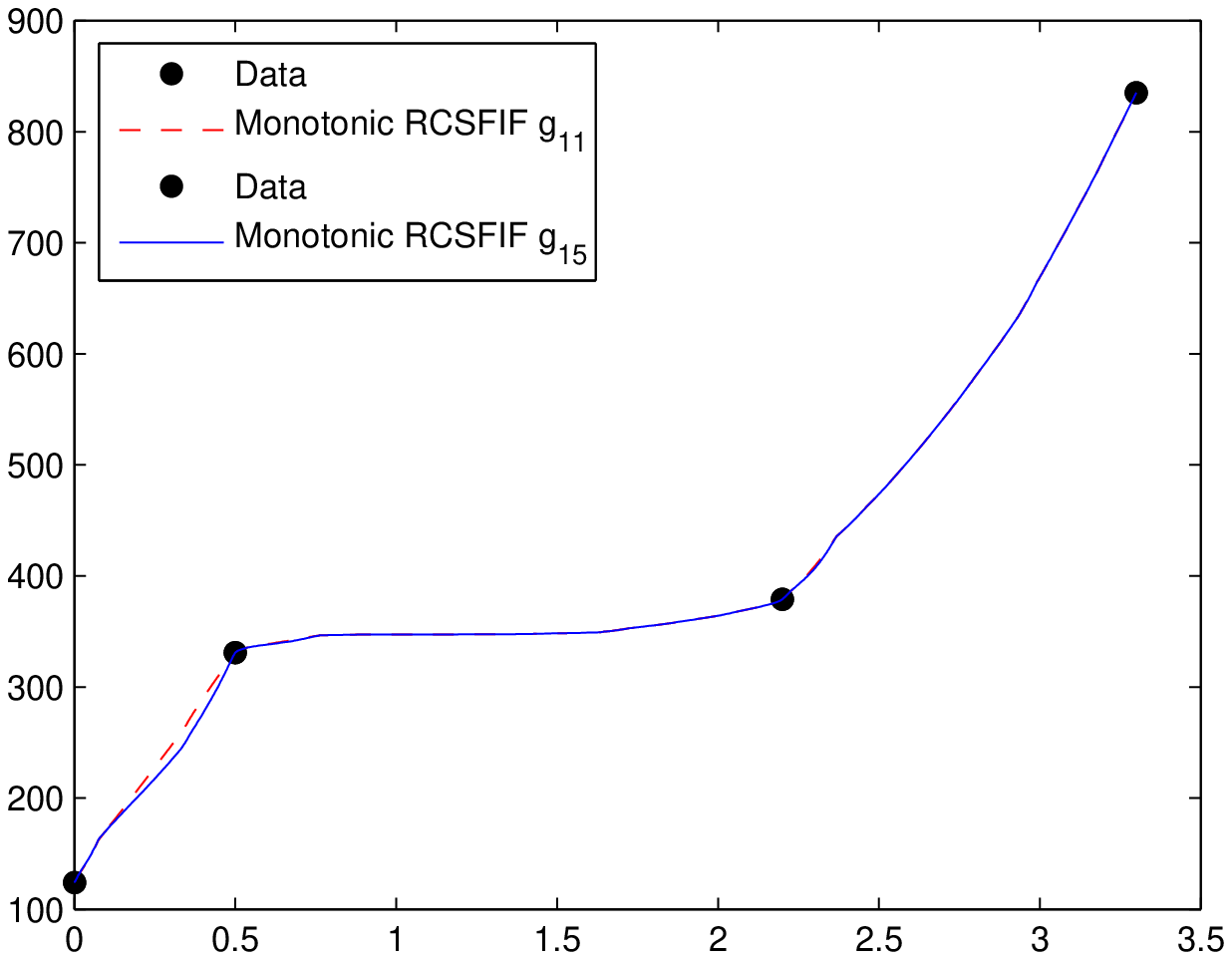,scale=0.25} \centering{\scriptsize{(e)
Monotonic RCSFIF $g_{15}$ (effect of perturbation in $v_1$)}}
\end{minipage}\hfill
\begin{minipage}{0.3\textwidth}
\epsfig{file=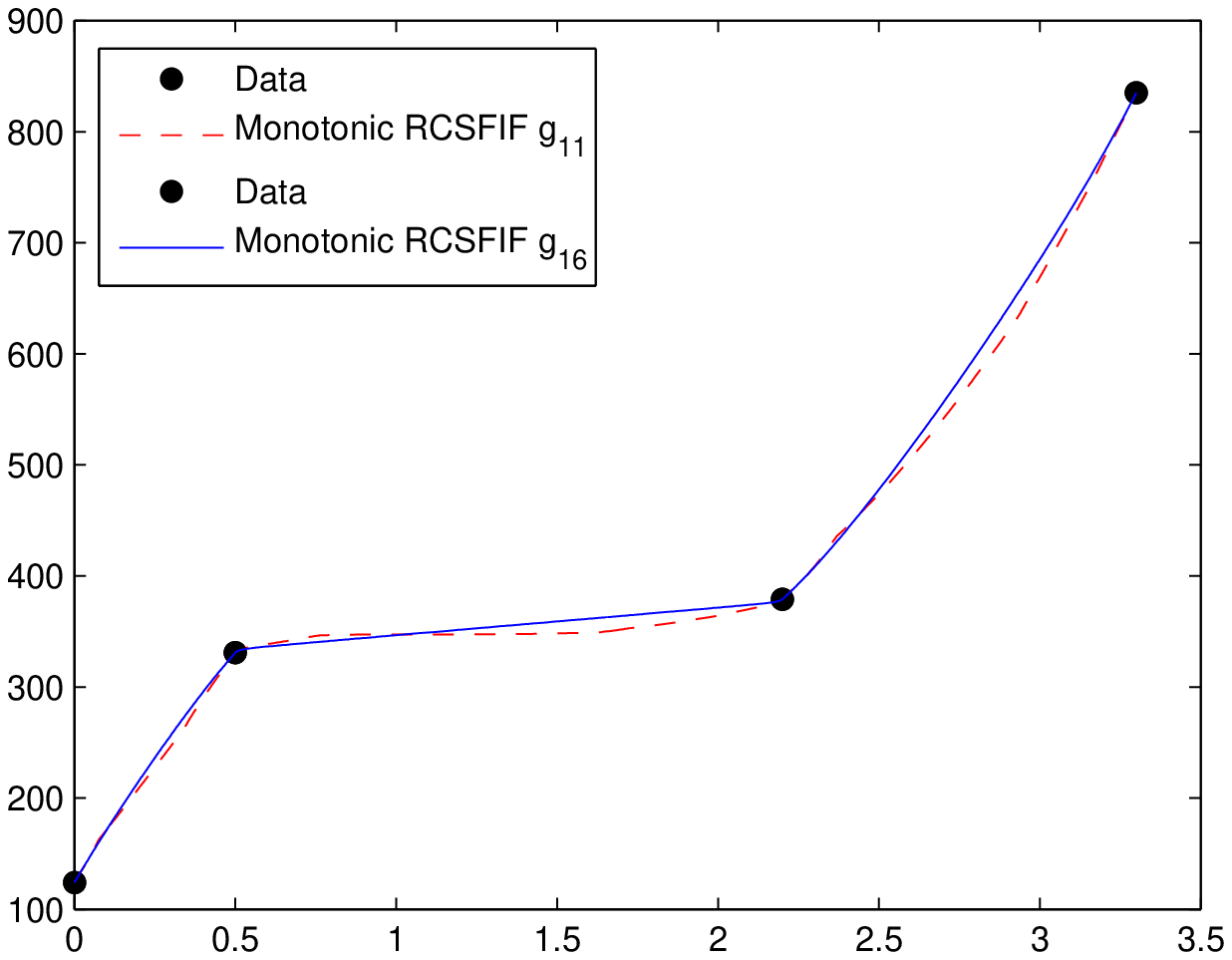,scale=0.25} \centering{\scriptsize{(f)
Classical Monotonic rational cubic spline $g_{16}$}}
\end{minipage}\hfill\\
\caption{Monotonic RCSFIFs $g_{1n}, n=1,\dots,5$ (the
interpolating data points are given by the circles and the relevant RCSFIF by solid lines).}\label{chapter3fig2}
\end{center}
\end{figure}

%%%%begining of figures of derivative of monotonicity
\begin{figure}[h!]
\begin{center}
\begin{minipage}{0.3\textwidth}
\epsfig{file=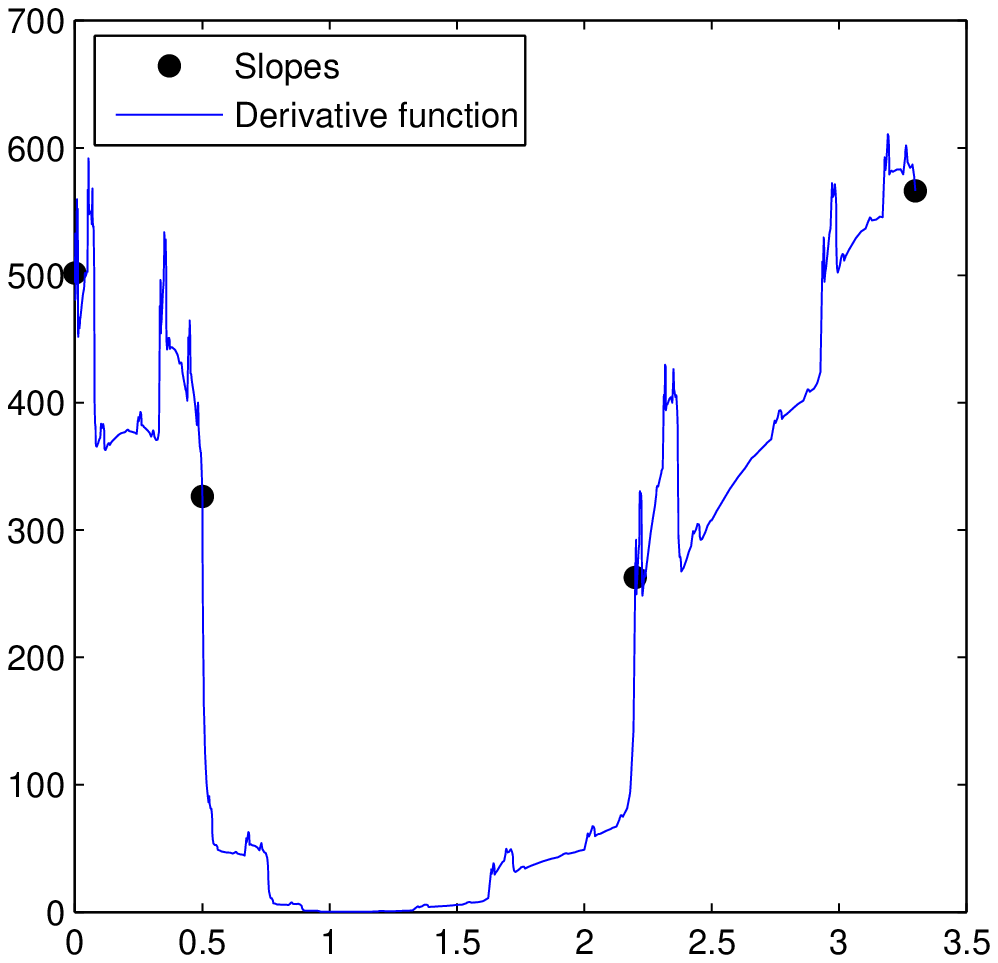,scale=0.25} \centering{\scriptsize{(a)
Fractal function $g^{(1)}_{11}$}}
\end{minipage}\hfill
\begin{minipage}{0.3\textwidth}
\epsfig{file=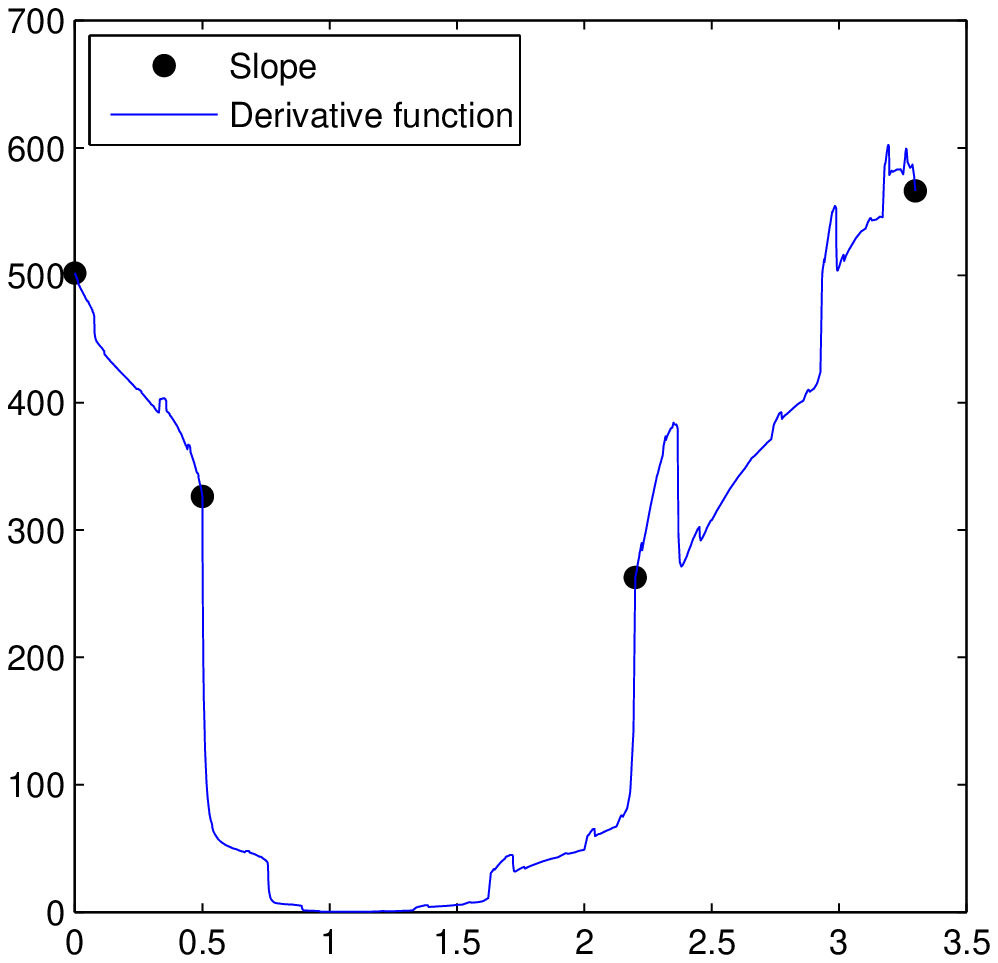,scale=0.25} \centering{\scriptsize{(b)
Fractal function $g^{(1)}_{12}$}}
\end{minipage}\hfill
\begin{minipage}{0.3\textwidth}
\epsfig{file = 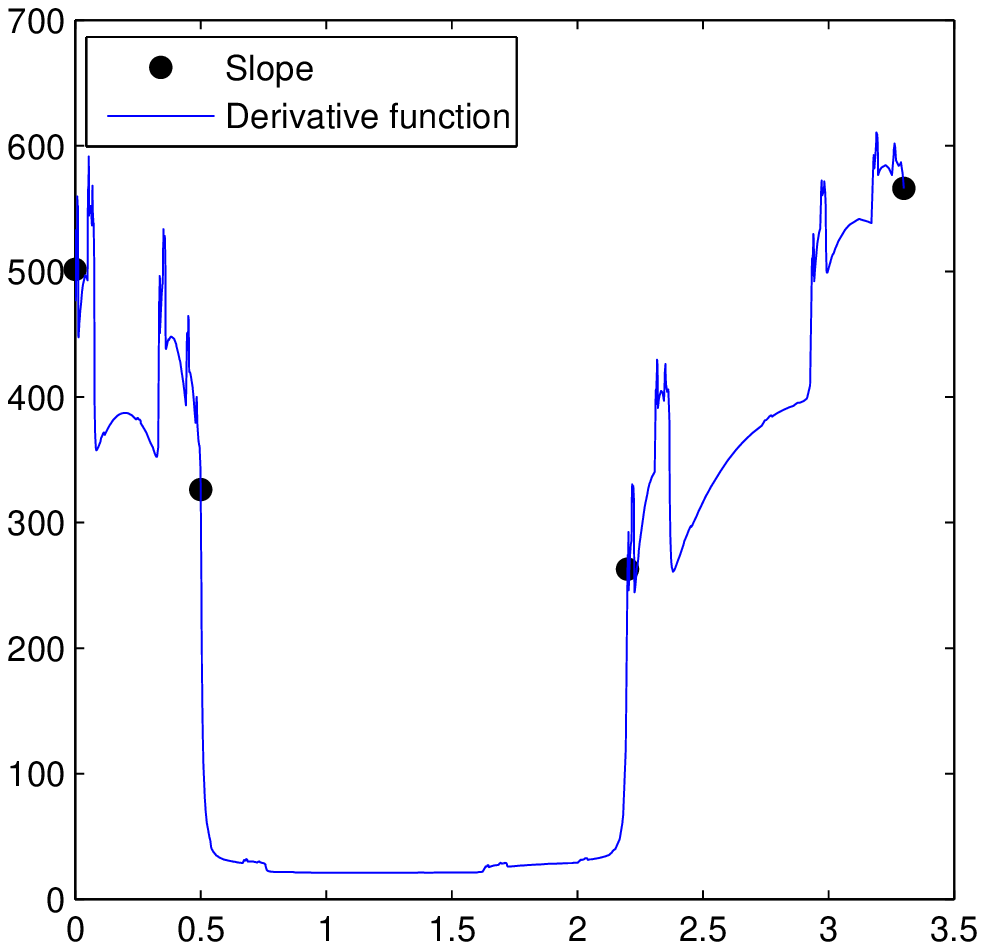,scale=0.25}\\ \centering{\scriptsize{(c)
Fractal function $g^{(1)}_{13}$}}
\end{minipage}\hfill
\begin{minipage}{0.3\textwidth}
\epsfig{file=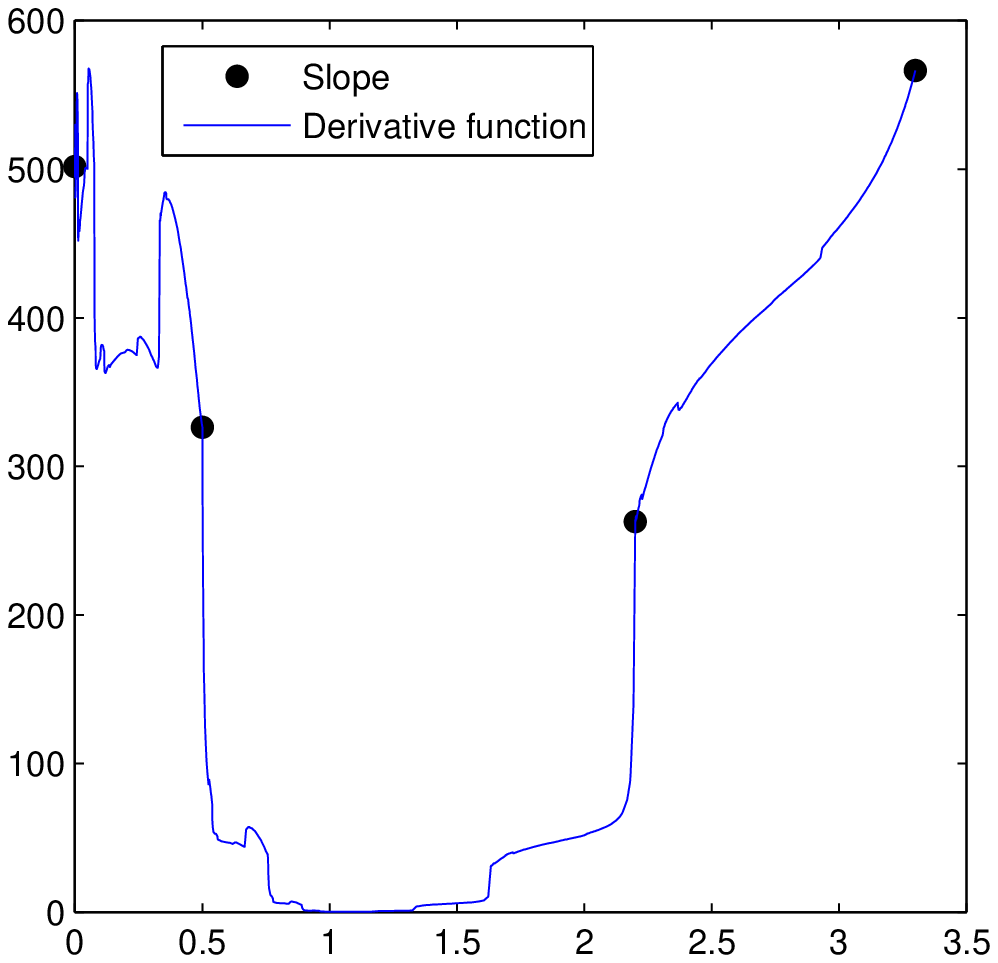,scale=0.25} \centering{\scriptsize{(d)
Fractal function $g^{(1)}_{14}$}}
\end{minipage}\hfill
\begin{minipage}{0.3\textwidth}
\epsfig{file=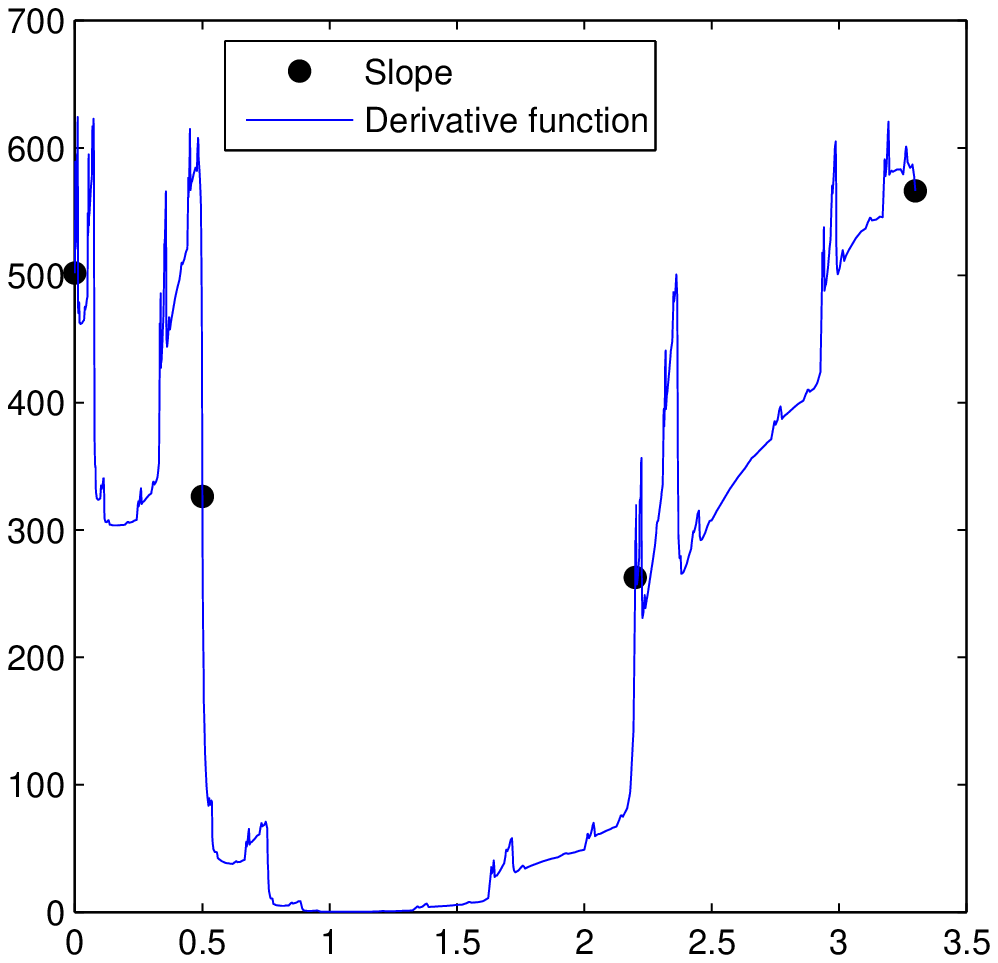,scale=0.25} \centering{\scriptsize{(e)
Fractal function $g^{(1)}_{15}$}}
\end{minipage}\hfill
\begin{minipage}{0.3\textwidth}
\epsfig{file=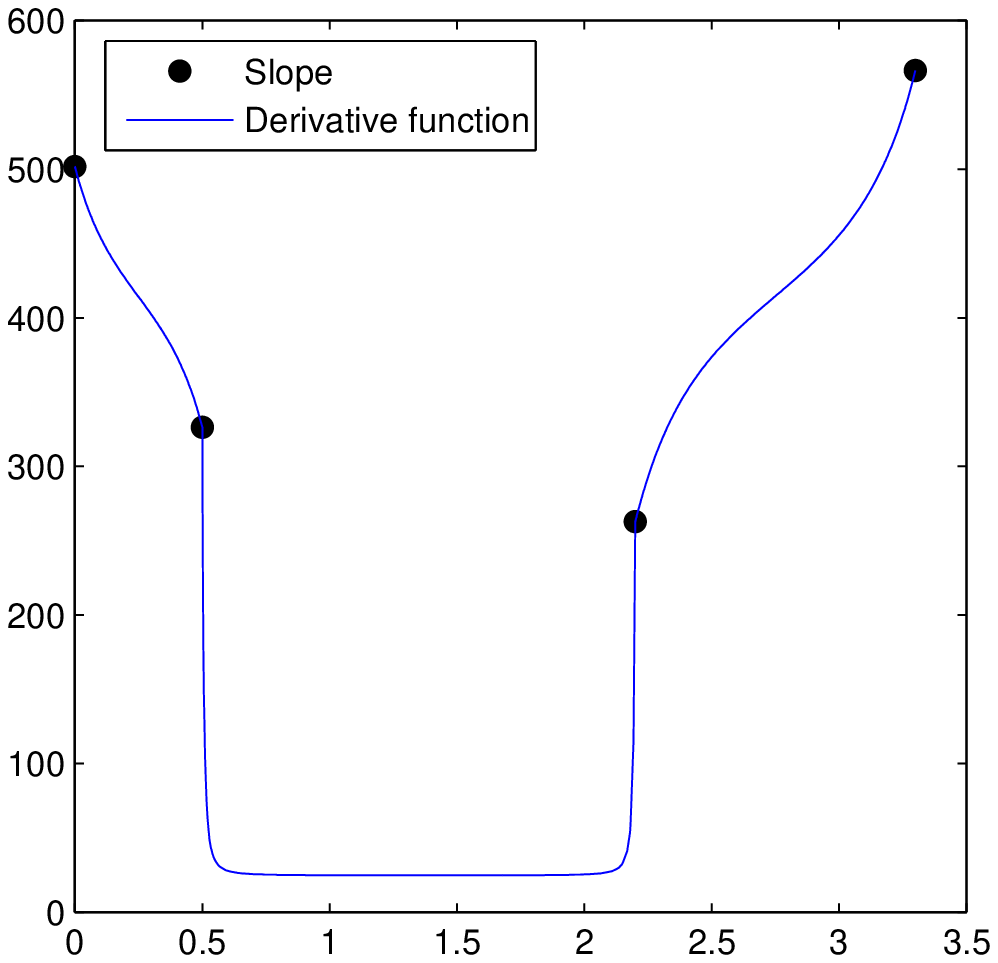,scale=0.25} \centering{\scriptsize{(f)
Function $g^{(1)}_{16}$}}
\end{minipage}\hfill\\
\caption{Derivatives of the monotonic RCSFIFs in Figs.
\ref{chapter3fig2}(a)-(f)}\label{chapter3fig3}
\end{center}
\end{figure}
\par
\noindent Consider a convex data set  $\{(0,0), (0.5,8.7713), (0.75,18.8599), (1,
32.4673)\}$ and the derivative values are estimated using the amm as $d_1=2.3347,d_2=32.7505,  d_3=47.3920,  d_4=61.4672$, which satisfy the necessary convexity conditions:
$d_1<\Delta_1<d_2<\Delta_2<d_3<\Delta_3<d_4$;
$d_1<\frac{y_4-y_1}{x_4-x_1}<d_4$. In Fig. \ref{chapter3fig4}(a), we do not follow the prescription given in Theorem \ref{RCFIFthm5} for which we obtain a non-convex RCSFIF, and the values of scaling factors and shape parameters are shown in Table \ref{chapter3table3}. Since the scaling factors are selected
only to satisfy $|\alpha_n|<a_n^2$, $n=1,2$, and $3$, the
fractal curve in Fig. \ref{chapter3fig4}(a) has undesired inflections in first subinterval. Next we apply Theorem \ref{RCFIFthm5} to get suitable values of
the scaling factors and  the shape parameters that generate convex
RCSFIFs. The computed bounds on the scaling factors are :  $0\leq \alpha_1<0.2500$,
$0\leq\alpha_2<0.0607$, $0\leq\alpha_3<0.0584$. The convex RCSFIF in Fig. \ref{chapter3fig4}(b) is generated with the scaling
factors and shape parameters (see Table \ref{chapter3table3}) according
to Theorem \ref{RCFIFthm5}.  By taking $\alpha_n=0$, $n=1,2,3$ and the shape parameters as in Table \ref{chapter3table3}, a classical rational cubic spline that preserves the convexity of the data is obtained in \ref{chapter3fig4}(c).  To claim that the RCSFIFs relating to other
subintervals are not extremely sensitive towards the changes of
parameters in a particular subinterval, we have taken the same sets of
parameters (see Table \ref{chapter3table3}) except for the scaling factor
in the first subinterval and plotted the \ref{chapter3fig4}(d)-(e). We observe that the curves differ only in the first subinterval. We obtain a convex
RCSFIF in Fig. \ref{chapter3fig4}(f) with negative scalings in all the subintervals so the conditions prescribed by Theorem \ref{RCFIFthm5} are sufficient but not necessary. In general, the 2nd derivative of convex RCSFIFs
are typical fractal functions having points of nondifferentiabilty on finite or dense subset of the interpolation interval.
%%%%Table for convexity
\begin{center}
\begin{table}[h!]
\caption{Parameters corresponding to RCSFIFs in Fig. \ref{chapter3fig4}}\label{chapter3table3}
\begin{center}
\scriptsize {
\begin{tabular}{|l |l| l| l|}\hline
$\hspace{0.2cm}Figure \hspace{0.6cm}$&$\hspace{0.2cm}Scaling factors  \hspace{0.6cm}$&$\hspace{0.2cm}Shape parameters\hspace{0.6cm}$\\ \hline
 ~~~\ref{chapter3fig2}(a)&$\alpha=(-0.24,0.05,0.04)$&$u=(0.1,0.1,0.1)$,$v=(0.2,0.15,0.14)$\\\hline
 ~~~\ref{chapter3fig2}(b)&$\alpha=(0.24,0.05,0.04)$&$u=(0.1,0.1,0.1)$,$v=(0.2,0.15,0.14)$\\ \hline
 ~~~\ref{chapter3fig2}(c)&$\alpha=(0, 0,  0)$&$u=(0.1,0.1,0.1)$,$v=(0.2,0.15,0.14)$ \\ \hline
 ~~~\ref{chapter3fig2}(d)&$\alpha=(0.24,0.05,0.04)$&$u=(0.2,0.3,0.4)$,$v=(0.3,0.2,0.3)$ \\ \hline
 ~~~\ref{chapter3fig2}(e)&$\alpha=(0.1,0.05,0.04)$&$u=(0.2,0.3,0.4)$,$v=(0.3,0.2,0.3)$\\\hline
 ~~~\ref{chapter3fig2}(f)&$\alpha=(-0.01, -0.01,-0.01  0)$&$u=(2,2,2)$,$v=(3,3,3)$\\ \hline
\end{tabular}}
\end{center}
\end{table}
\end{center}
%%%%begining of figures of convexity
\begin{figure}[h!]
\begin{center}
\begin{minipage}{0.3\textwidth}
\epsfig{file=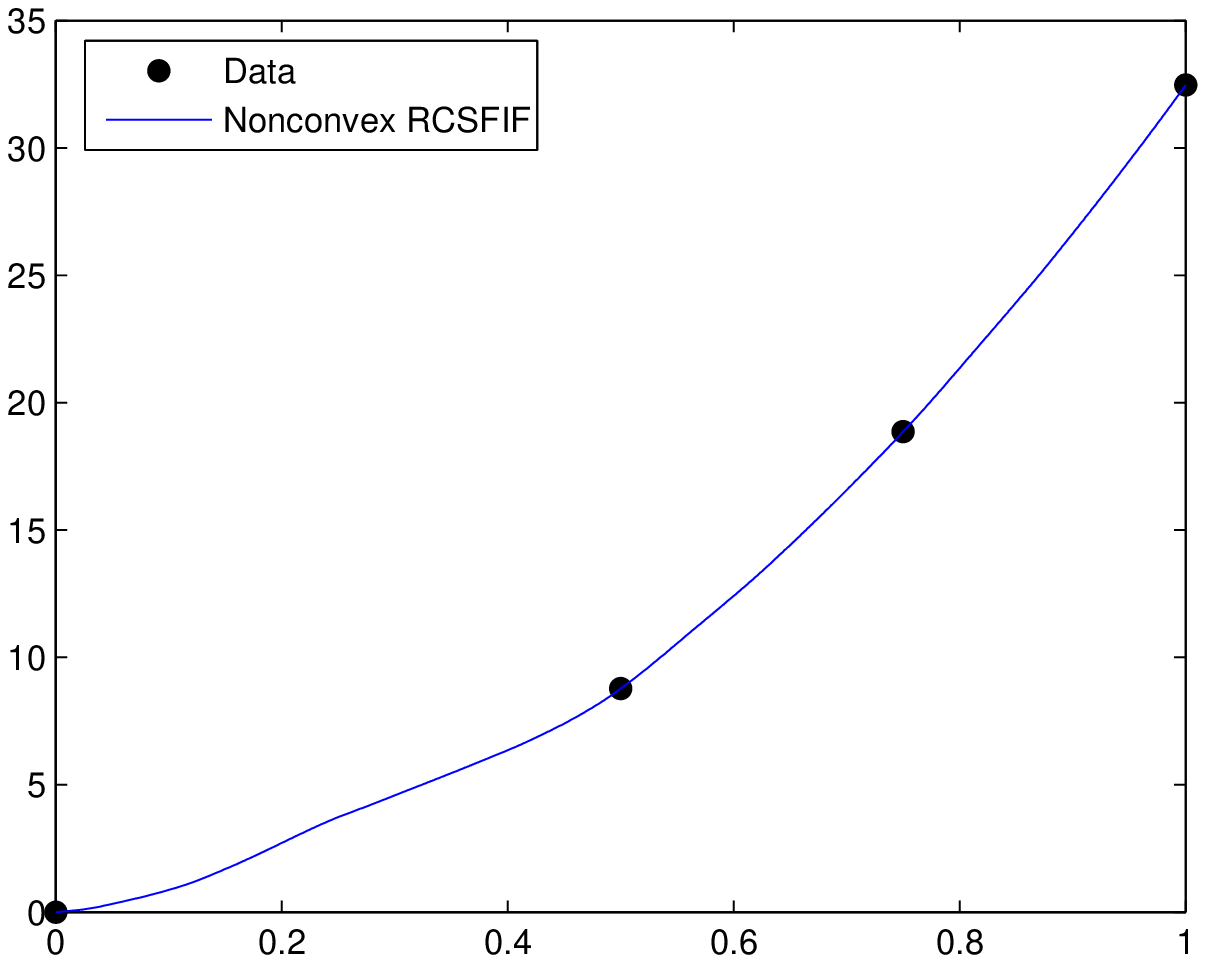,scale=0.25} \centering{\scriptsize{(a)
Non-convex RCSFIF}}
\end{minipage}\hfill
\begin{minipage}{0.3\textwidth}
\epsfig{file=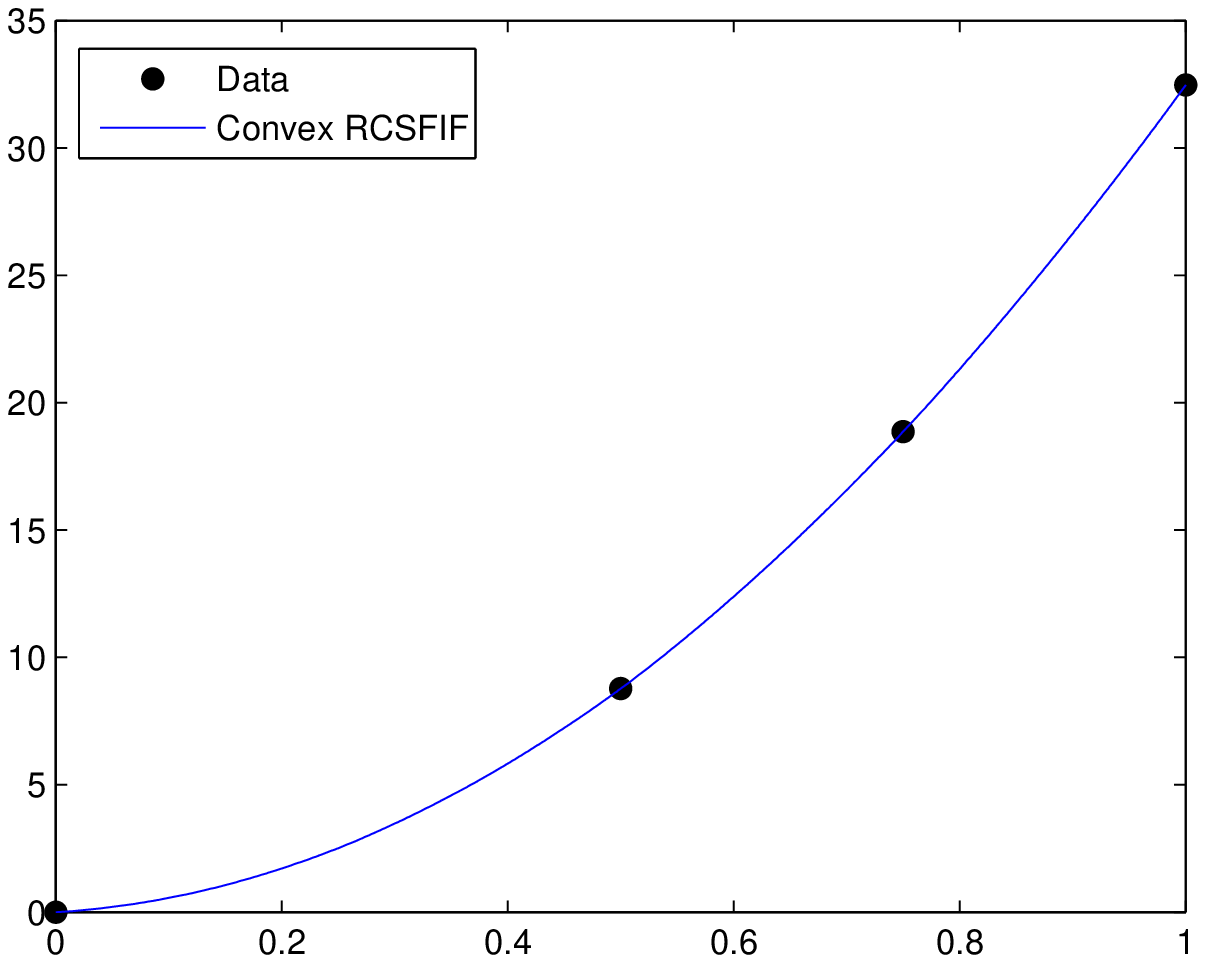,scale=0.25} \centering{\scriptsize{(b)
Convex RCSFIF}}
\end{minipage}\hfill
\begin{minipage}{0.3\textwidth}
\epsfig{file = 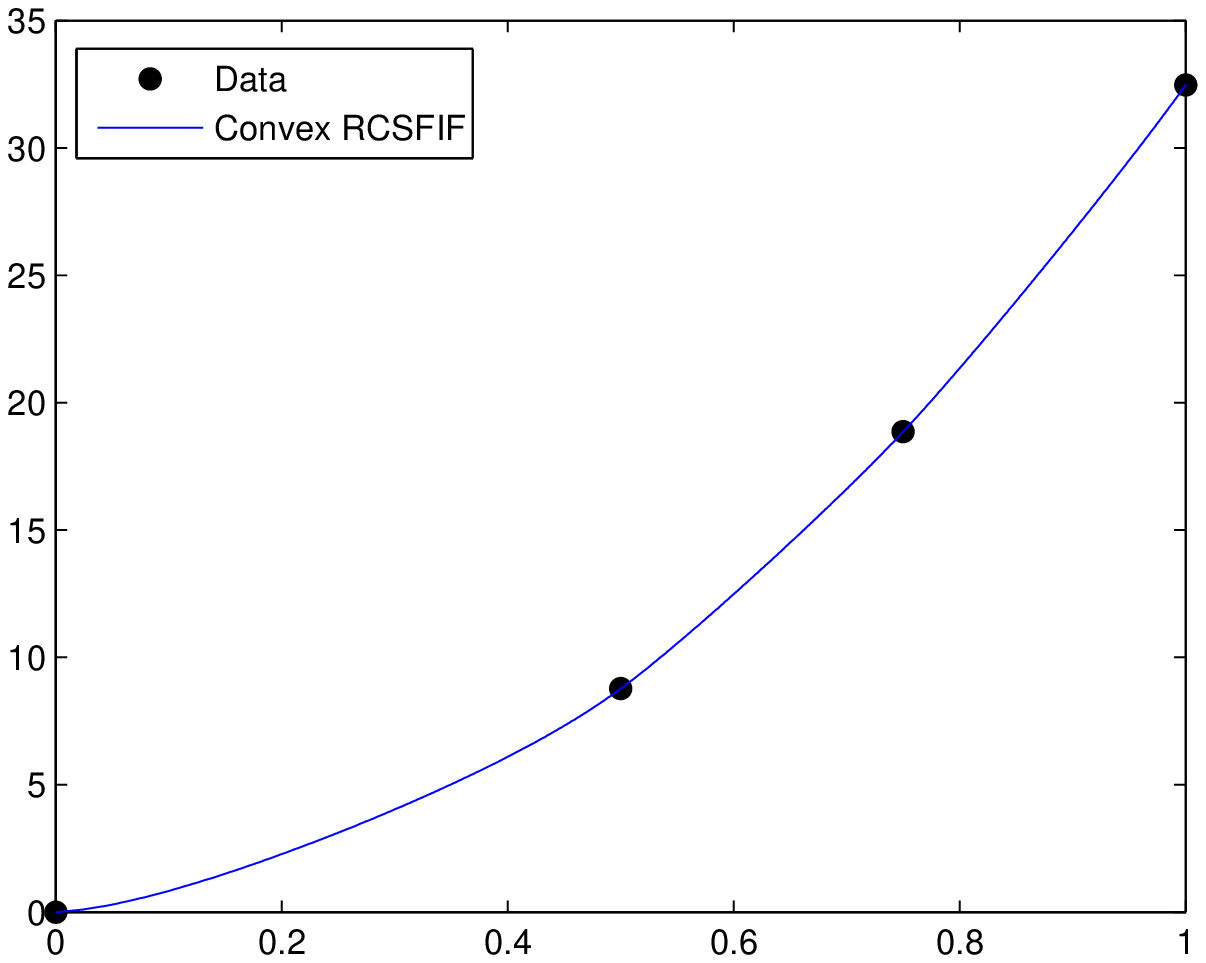,scale=0.25}\\ \centering{\scriptsize{(c)
Classical rational cubic spline}}
\end{minipage}\hfill
\begin{minipage}{0.3\textwidth}
\epsfig{file=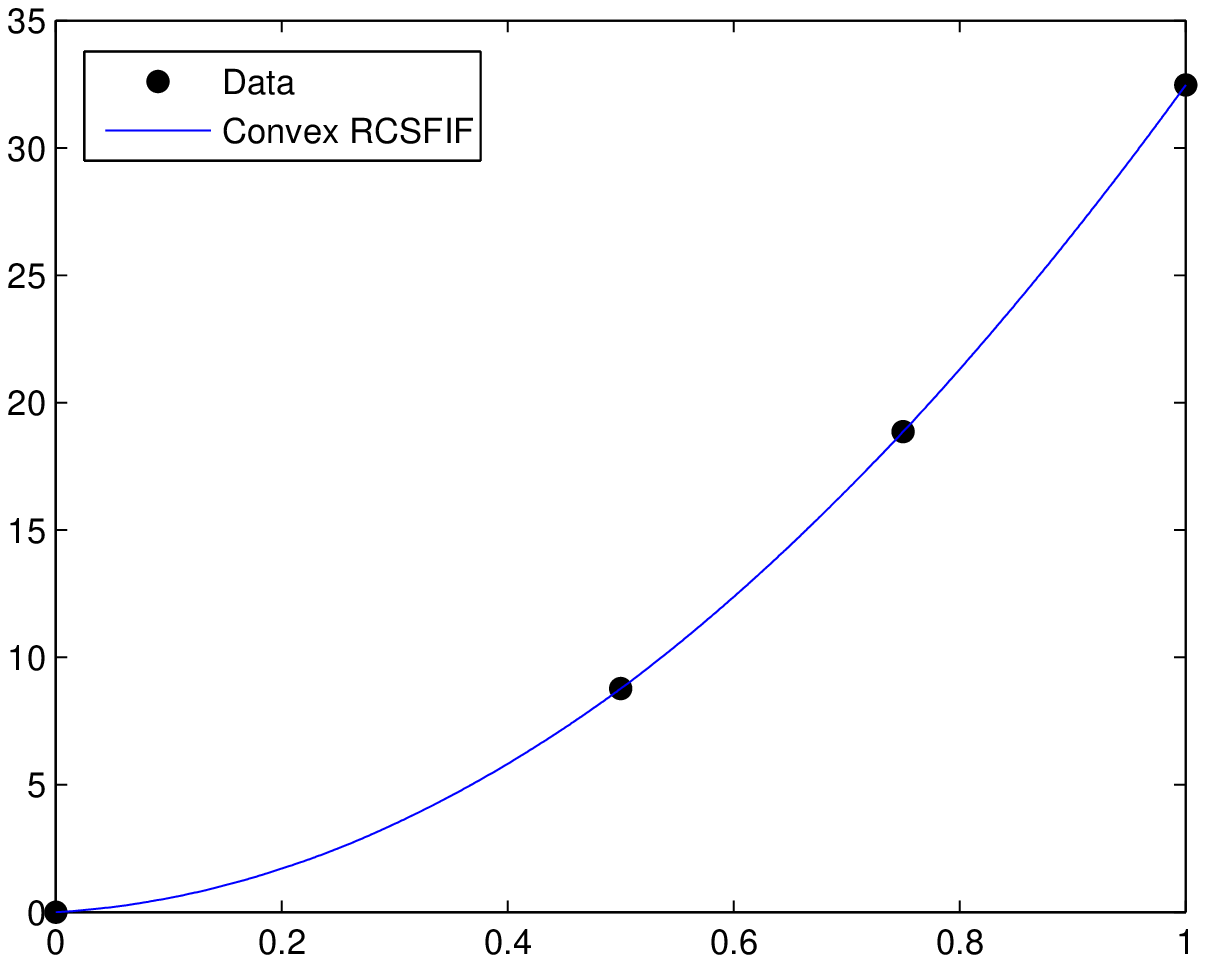,scale=0.25} \centering{\scriptsize{(d)
Convex RCSFIF}}
\end{minipage}\hfill
\begin{minipage}{0.3\textwidth}
\epsfig{file=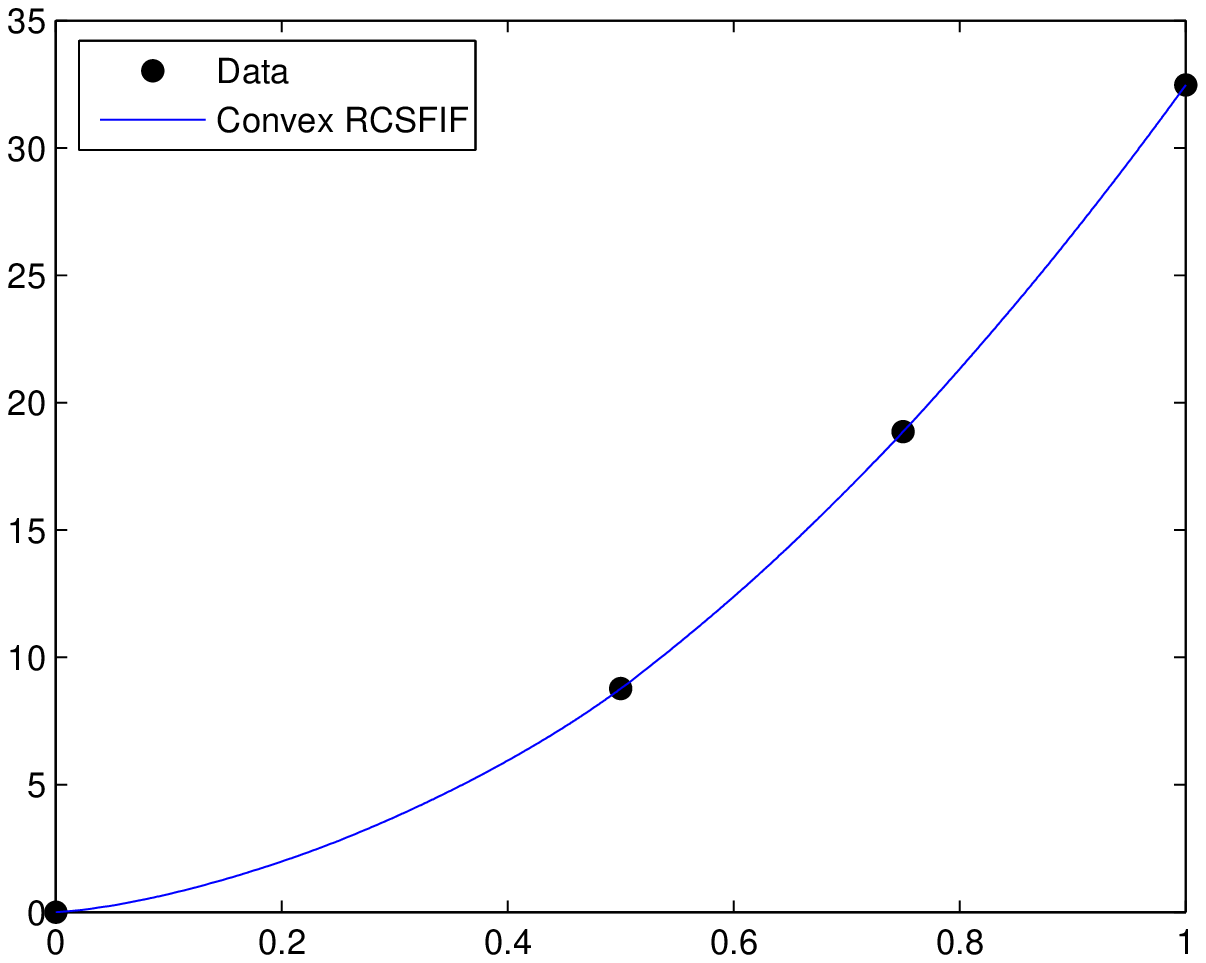,scale=0.25} \centering{\scriptsize{(e)
Convex RCSFIF}}
\end{minipage}\hfill
\begin{minipage}{0.3\textwidth}
\epsfig{file=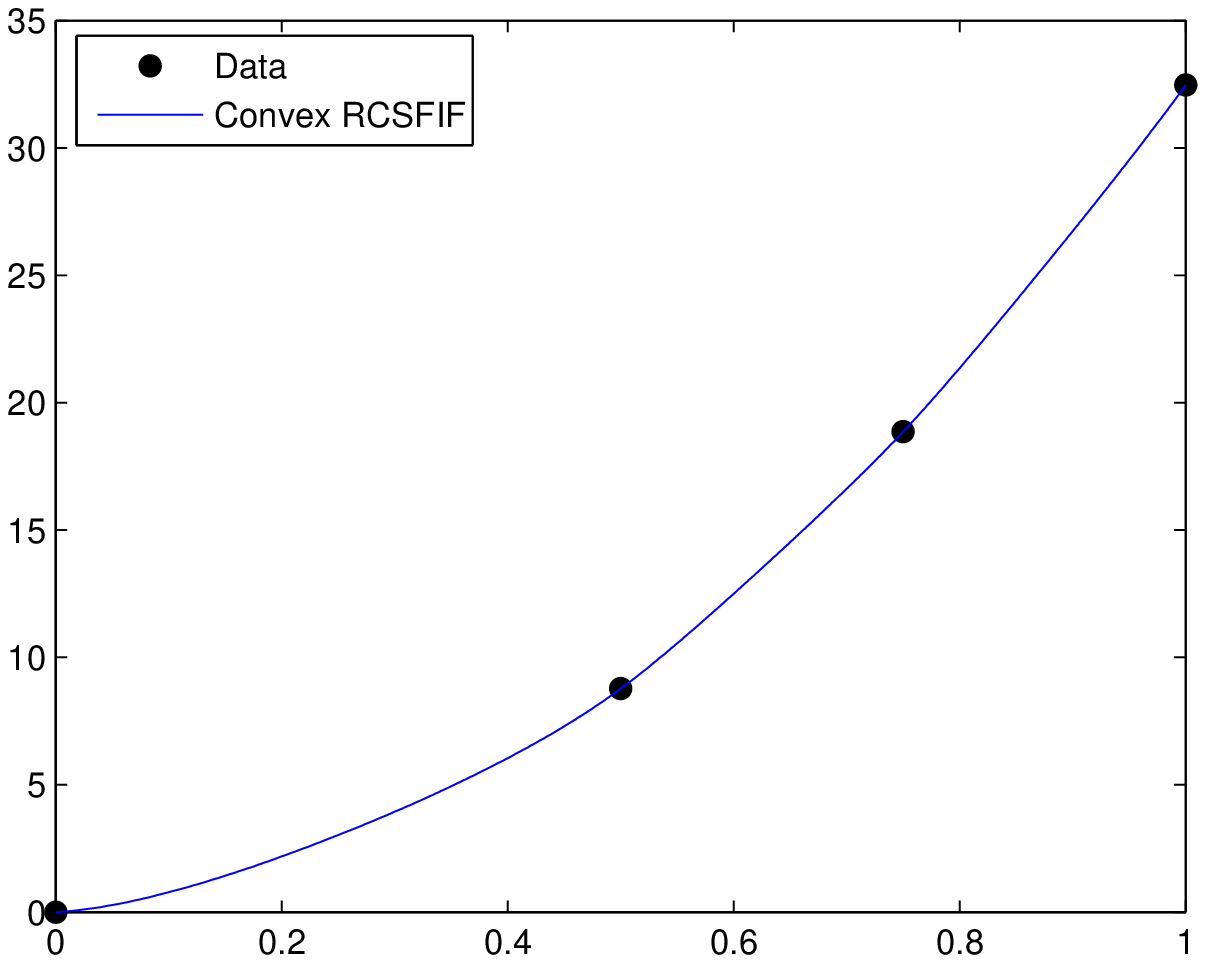,scale=0.25} \centering{\scriptsize{(f)
Convex RCSFIF}}
\end{minipage}\hfill\\
\caption{Convex and non-convex RCSFIFs for the data set in Table
\ref{chapter3table3}.}\label{chapter3fig4}
\end{center}
\end{figure}

\section{Conclusion}\label{HALDIASANGEETAsec7}
In this paper, we have constructed RCSFIF with two family of shape parameters. We identify scaling factors and shape parameters so that the graph of the corresponding RCSFIF possesses monotonicity and convexity. The scaling parameters and shape parameters play an important role in determining the shape of a RCSFIF. Thus, according to the need of an experiment for simulating objects with smooth geometrical shapes, a large flexibility in the choice of a suitable interpolating smooth fractal interpolant is offered by our approach. As in the case of vast applications of classical rational interpolants in CAM, CAD, and other mathematical, engineering applications, it is felt that RCSFIFs can find rich applications in some of these areas. Further, as classical piecewise cubic Hermite interpolant, $\mathcal{C}^1$-cubic Hermite FIF \cite{CV2}, and $\mathcal{C}^1$-rational cubic spline \cite{SHH} are special cases of RCSFIFs. It is possible to use RCSFIFs for mathematical and engineering problems where these 
approaches does not work satisfactorily. The upper bound for the error between the original function $\Phi\in \mathcal{C}^3$ and the RCCHFIF $g$ is deduced.
%The upper bound for the error between the original function $\Phi$ and the RCCHFIF $\Psi$ is deduced.

\end{document}